\numberwithin{equation}{section}
\theoremstyle{plain}
 \newtheorem{thm}{Theorem}[section]
 \newtheorem{lem}[thm]{Lemma}
 \newtheorem{cor}[thm]{Corollary}
 \newtheorem{prop}[thm]{Proposition}
\theoremstyle{definition}
 \newtheorem{defn}[thm]{Definition}
 \newtheorem{ex}[thm]{Example}
 \newtheorem{rem}[thm]{Remark}
\newcommand{\ep}{\varepsilon}
\newcommand{\q}{\quad}
\newcommand{\eqd}{\overset{\mathrm d}{=}}
\newcommand{\sek}{\int_0^{\infty}}
\newcommand{\wh}{\widehat}
\newcommand{\la}{\langle}
\newcommand{\ra}{\rangle}
\newcommand{\B}{\mathcal{B}}
\newcommand{\R}{\mathbb{R}}
\newcommand{\N}{\mathbb{N}}
\newcommand{\Z}{\mathbb{Z}}
\newcommand{\Q}{\mathbb{Q}}
\newcommand{\rd}{{\mathbb R^d}}
\newcommand{\law}{\mathcal L}
\newcommand{\n}{\noindent}
\def\1{\scalebox{0.94}{$1$}\hspace{-0.36em}1}
\begin{document}
\setlength{\baselineskip}{18pt}
\setlength{\parindent}{1.8pc}

\n
{\Large \bf The dichotomy of recurrence and transience of\\ semi-L\'evy processes}

\vskip 5mm
\n
({\it Running title:} Recurrence and transience of semi-L\'evy processes)

\vskip 10mm
\n
{MAKOTO MAEJIMA$^{*}$, TAISUKE TAKAMUNE and YOHEI UEDA}

\vskip 5mm
\n
{Department of Mathematics, Keio University, 3-14-1, Hiyoshi, 
Kohoku-ku, Yokohama 223-8522, Japan.}

\vskip 10mm
\n
Semi-L\'evy process is an additive process with periodically stationary increments.
In particular, it is a generalization of L\'evy process.
The dichotomy of recurrence and transience of L\'evy processes is well known,
but this is not necessarily true for general additive processes.
In this paper, we prove the recurrence and transience dichotomy of semi-L\'evy processes.
For the proof, we introduce a concept of semi-random walk and
discuss its recurrence and transience properties.
An example of semi-L\'evy process constructed from two independent L\'evy processes is investigated.
Finally, we prove the laws of large numbers for semi-L\'evy processes.

\vskip 5mm
\n
{\it keywords:} {L\'evy process; semi-L\'evy process; recurrence; transience; semi-random walk; 
law of large numbers}
\section{Introduction}
Recurrence and transience problem has been studied for many stochastic processes.
Among those, the dichotomy of recurrence and transience of L\'evy processes is well known
(see, e.g., Section 35 of \citet{Sato's_book1999}), which is not necessarily true for
general additive processes (see Remark 3.4 of \citet{SatoYamamuro1998}).
In this paper, we show the dichotomy for semi-L\'evy processes.

A stochastic process $\{ X_t\}_{t\ge 0}$ on $\rd$ is called an additive process if $X_0=0$ a.s., 
it is stochastically continuous, it has independent increments and its sample paths
are right-continuous in $t\ge 0$ and have left-limits in $t>0$.
Further, if $\{ X_t\}_{t\ge 0}$ has stationary increments, it is a L\'evy process.
As an extension of L\'evy process, we define a subclass of
additive processes with the property that for some $p>0$,
$$
X_{t+p}- X_{s+p} \eqd X_t-X_s \q \text{for any}\,\, s,t\ge 0.
$$
Process with this property is called a semi-L\'evy process with period $p$. 
Semi-L\'evy processes have already been considered in the literature by
\citet{MaejimaSato2003} and in Lemma 3.3 of \citet{BeckerKern2004}.

We give several comments regarding semi-L\'evy processes.
An infinitely divisible distribution $\mu$ on $\rd$ is called selfdecomposable if for any
$b>1$ there exists a distribution (necessarily infinitely divisible) $\mu_b$ on $\rd$
such that $\wh\mu(z) = \wh \mu (b^{-1}z)\wh \mu_b(z)$, (where $\wh\mu$ is the characteristic function 
of $\mu$), and it is called semi-selfdecomposable with span $b>1$ 
if there exists an infinitely divisible
distribution $\rho$ on $\rd$ such that $\wh\mu(z) = \wh\mu(b^{-1}z)\wh\rho(z)$.
L\'evy processes are connected to selfdecomposable distributions in the sense that $\mu$ is 
selfdecomposable if and only if  there exists a L\'evy process $\{ X_t\}_{t\ge 0}$ 
with $E[\log (1+|X_1|)]<\infty$ satisfying
\begin{equation}\label{stochastic int rep}
\mu = \law \left ( \sek e^{-t}dX_t\right ),
\end{equation}
where $\law(Y)$ denotes the law of a random variable $Y$.
In the same fashion, semi-L\'evy processes are related to semi-selfdecomposable distributions in the
sense that $\mu$ is 
semi-selfdecomposable with span $b>1$ if and only if there exists a semi-L\'evy process 
$\{ X_t\}_{t\ge 0}$
with period $p=\log b$ which is a semimartingale with $E[\log (1+|X_1|)]<\infty$ and 
which satisfies \eqref{stochastic int rep}
(see \citet{MaejimaSato2003}).

Furthermore, as shown in \citet{MaejimaSato2003} and \citet{BeckerKern2004},
semi-L\'evy processes are also related to semi-selfsimilar additive processes
which were introduced and deeply studied in \citet{MaejimaSato1999}.
More precisely, semi-L\'evy processes and semi-selfsimilar additive processes 
can mutually be represented by stochastic integrals with respect to each other
if they are semimartingales.

\begin{ex}\label{example}
The following example of semi-L\'evy process was given by Ken-iti Sato 
to the authors through a private communication.
Let $\{Y_t\}_{t\ge 0}$ and $\{Z_t\}_{t\ge 0}$ be two independent L\'evy processes and let
$0<q<p$ be arbitrary.
A stochastic process $\{X_t\}_{t\ge 0}$ defined by
\begin{align*}
& X_0 = 0 \quad a.s., \\
& X_t = 
\begin{cases}
X_{np} + Y_t - Y_{np} & (np < t \le np+q),\\
X_{np+q} + Z_t - Z_{np+q} & (np+q < t \le (n+1)p)
\end{cases}
\end{align*}
is a semi-L\'evy process with period $p$.
The recurrence and transience property of this process is discussed in Section 4.
\end{ex}

Throughout this paper, $|x|$ denotes the Euclidean norm of $x\in\rd$,
$\Z_+$ denotes the set of nonnegative integers and
$B_a := \{x \in \mathbb{R}^d\colon |x|<a \}$ denotes the open ball of radius $a>0$ around the origin.
\begin{defn}
[\citet{SatoYamamuro1998}]
Let $s\geq 0$.
An additive process $\{X_t\}_{t\ge 0}$ on $\rd$ is called \emph{$s$-recurrent} if
\begin{equation*}
\liminf_{t \to \infty} |X_t-X_s| = 0 \quad \text{a.s.},
\end{equation*}
and it is called \emph{recurrent} if it is $s$-recurrent for any $s\geq 0$.
It is called \emph{transient} if
\begin{equation*}
\lim_{t \to \infty} |X_t| = \infty \quad \text{a.s.}
\end{equation*}
\end{defn}

\begin{rem}
Note that, for L\'evy processes, by the stationary increment property, 
recurrence and $0$-recurrence are equivalent.
This is also the case for semi-L\'evy processes as seen in Theorem \ref{sLdich} (i) below. 
\end{rem}

Now, we state the main results of this paper.

\begin{thm}
\label{sLdich}
Let $\{X_t\}_{t\ge 0}$ be a semi-L\'evy process on $\rd$ with period $p>0$. 
Then we have the following.
\begin{enumerate}[\quad \rm (i)]
\item It is recurrent if it is $0$-recurrent.
\item It is either recurrent or transient.
\item It is recurrent if and only if
\begin{equation}
\int^\infty_0 P(X_t \in B_a)dt = \infty \quad \text{for every }a>0. \label{sLii}
\end{equation}
\item It is recurrent if and only if
\begin{equation*}
\int^\infty_0 \1_{B_a}(X_t)dt = \infty \quad \text{a.s.} \quad \text{for every} \;\, a>0. \label{sLiii}
\end{equation*}
\item It is transient if and only if
\begin{equation}
\int^\infty_0 P(X_t \in B_a)dt < \infty \quad \text{for every }a>0. \label{sLiv}
\end{equation}
\item It is transient if and only if
\begin{equation}
\int^\infty_0 \1_{B_a}(X_t)dt < \infty \quad \text{a.s.} \quad \text{for every } a>0. \label{sLv}
\end{equation}
\item Fix $h\in (0,\infty)\cap p\Q$ arbitrarily.
Then, the semi-L\'evy process $\{X_t\}_{t\ge 0}$ is recurrent if and only if the semi-random walk
$\{X_{nh}\}_{n\in\Z_+}$ is recurrent.
$($See Section $2$ for the definition of semi-random walk.$)$
\end{enumerate}
\end{thm}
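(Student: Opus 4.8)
The plan is to transfer the whole question to the sampled process $\{X_{nh}\}$, which turns out to be a \emph{semi-random walk}, and then to invoke the recurrence/transience dichotomy for such walks developed in Section~2 -- which I take as given here: a semi-random walk $\{S_n\}_{n\in\Z_+}$ is recurrent (equivalently $0$-recurrent, equivalently $k$-recurrent for every $k$) iff $\sum_n P(S_n\in B_a)=\infty$ for some $a>0$, equivalently for all $a>0$, and is transient with $|S_n|\to\infty$ a.s.\ otherwise. First I would check that $\{X_{nh}\}$ really is a semi-random walk: fix $h\in(0,\infty)\cap p\Q$ and pick $m\in\N$ with $mh\in p\Z_+$; since $\{X_t\}$ has independent increments so does $\{X_{nh}\}$, and $X_{(n+m)h}-X_{(n+m-1)h}\eqd X_{nh}-X_{(n-1)h}$ by the semi-L\'evy property (as $mh$ is a multiple of $p$), so the increment laws of $\{X_{nh}\}$ are periodic in $n$. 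One also checks directly that $\{X_{s+t}-X_s\}_{t\ge0}$ is again semi-L\'evy with period $p$ for every $s\ge0$, and that with $h=p$ the sampled process is an ordinary random walk.

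Next I would compare occupation measures. Splitting $[0,\infty)=\bigcup_n[nh,(n+1)h)$ gives $\int_0^\infty P(X_t\in B_a)\,dt=\sum_n\int_0^h P(X_{nh+u}\in B_a)\,du$. Writing $X_{nh+u}=X_{nh}+(X_{nh+u}-X_{nh})$, using independence of this increment from $X_{nh}$, and using that the law of the oscillation $\sup_{0\le u\le h}|X_{nh+u}-X_{nh}|$ depends on $n$ only through the finitely many residues $n\bmod m$ and is a.s.\ finite (additive paths are bounded on compacts), one obtains for each $a>0$ constants $c>0,\ \delta>0$ with $\delta\,P(X_{nh}\in B_{a/2})\le\int_0^h P(X_{nh+u}\in B_a)\,du\le 2h\,P(X_{(n+1)h}\in B_{a+c})$ for all $n$. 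Summing over $n$, and using the Section~2 principle that $\sum_n P(S_n\in B_a)=\infty$ for one $a$ iff for all $a$, yields that \eqref{sLii} is equivalent to $\sum_n P(X_{nh}\in B_a)=\infty$ for all $a>0$, hence to recurrence of the semi-random walk $\{X_{nh}\}$ -- and this for \emph{every} admissible $h$, which is exactly (vii) once recurrence of $\{X_t\}$ has been matched with \eqref{sLii}.

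To make that match I would adapt the classical occupation-time argument for L\'evy processes, replacing time-homogeneity by the semi-Markov structure of $\{X_t\}$ and uniformity over the residues $n\bmod m$: one shows that $\{X_t\}$ is $0$-recurrent iff $\int_0^\infty\1_{B_a}(X_t)\,dt=\infty$ a.s.\ for all $a>0$ iff \eqref{sLii} holds, and that ``$\int_0^\infty\1_{B_a}(X_t)\,dt<\infty$ a.s.\ for all $a>0$'' forces $|X_t|\to\infty$ a.s. Combined with the previous paragraph this already delivers (iii)--(vi), the dichotomy between $0$-recurrence and transience, and (vii).

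Finally, for (i), I would apply the above to $\{X_{s+t}-X_s\}$: its $0$-recurrence -- which is precisely $s$-recurrence of $\{X_t\}$ -- is equivalent to recurrence of the ordinary random walk $\{X_{s+np}-X_s\}_n$, with step law $\law(X_{s+p}-X_s)$. Taking $0\le s<p$ and writing $X_{s+p}-X_s=(X_p-X_s)+(X_{s+p}-X_p)$ with independent summands and $X_{s+p}-X_p\eqd X_s$ shows $\law(X_{s+p}-X_s)=\law(X_p-X_s)*\law(X_s)=\law(X_p)$, which is also the step law governing $\{X_{np}\}$, hence governing $0$-recurrence of $\{X_t\}$; so all the $s$-recurrences stand or fall together, giving (i), and then (ii),(v),(vi) follow since the negation of \eqref{sLii} forces transience by the occupation-time lemma. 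I expect this occupation-time lemma -- the quantitative, \emph{phase-uniform} lower bound on the time $\{X_t\}$ spends near its current position, with the accompanying $0$--$1$ law -- to be the main obstacle; it is routine for L\'evy processes but must be redone attending to the dependence of the relevant probabilities on the phase $t\bmod p$, whereas the comparison estimates above are elementary once boundedness of additive paths on compacts is invoked.
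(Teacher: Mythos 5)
Your overall architecture is the same as the paper's: sample at $h\in(0,\infty)\cap p\Q$, observe that $\{X_{nh}\}_{n\in\Z_+}$ is a semi-random walk, compare $\int_0^\infty P(X_t\in B_a)dt$ with $\sum_n P(X_{nh}\in B_{a'})$ using the finitely many phases and the a.s.\ boundedness of paths on compacts, and then invoke the Section~2 dichotomy; your two-sided comparison inequality is correct and is essentially the device the paper uses in the step (3)$\Rightarrow$(4) of Lemma \ref{semiLevylemma} and in the proof of part (vii). Your route to (i) is a nice variant: checking $\law(X_{s+p}-X_s)=\law(X_p-X_s)*\law(X_s)=\law(X_p)$ and concluding that all the sampled walks $\{X_{s+np}-X_s\}_{n\in\Z_+}$ share one step law is correct, and in effect it also yields Theorem \ref{LevsemiLev}; the paper instead deduces (i) from the dichotomy together with the bound $\liminf_{t\to\infty}|X_{t+s}-X_s|\le |X_s|<\infty$ a.s.

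The genuine gap is exactly where you place ``the main obstacle'': the phase-uniform occupation-time lemma is asserted, not proved, and everything in (iii)--(vi), the $0$-recurrence/transience dichotomy, and hence (i), (ii) and (vii) hinges on it. Concretely you need (a) $0$-recurrence $\Rightarrow\int_0^\infty \1_{B_a}(X_t)dt=\infty$ a.s., and (b) $\int_0^\infty P(X_t\in B_a)dt<\infty$ for all $a>0$ $\Rightarrow |X_t|\to\infty$ a.s.; both rest on a bound of the form $P\left(\int_t^\infty \1_{B_{2a}}(X_s)ds>\ep\right)\ge \gamma(a,\ep)\,P(|X_{t+s}|<a\text{ for some }s>0)$ with $\gamma(a,\ep)\to 1$ as $\ep\downarrow 0$ \emph{uniformly in $t$}, and saying the classical argument ``must be redone attending to the phase $t\bmod p$'' does not explain how this uniformity is obtained. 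The paper's Lemma \ref{lemmaking} supplies it: condition on $\Lambda\in\mathcal F_t$ with $\Lambda\subset\{|X_t|<a\}$, apply $2E[Y\,|\,\Lambda]\le P(Y>1/2\,|\,\Lambda)+1$ to $Y=(2\ep)^{-1}\int_t^{t+2\ep}\1_{B_{2a}}(X_s)ds$, shift time by $[t/p]p$ so that the relevant increment probabilities depend only on a phase in $[0,p)$, and use uniform stochastic continuity on the compact interval $[0,p+1]$ to get $\inf_{t\in[0,p)}P(|X_{t+\delta}-X_t|<a)\to 1$ as $\delta\downarrow 0$; a first-entrance decomposition over the dyadic times $t+2^{-k}n$ and right-continuity of paths then give the stated inequality, from which (a) and (b) follow as in the L\'evy case (no separate zero-one law is needed). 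Until this lemma, or an equivalent, is actually proved, your chain of equivalences between recurrence of $\{X_t\}$, \eqref{sLii}, and recurrence of $\{X_{nh}\}$ does not get started.
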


Further, let $\{X_t\}_{t\ge 0}$ be a semi-L\'evy process with period $p>0$.
Since $X_p$ is an infinitely divisible random variable, 
then there is a L\'evy process $\{Y_t\}_{t\ge 0}$ such that $Y_1\eqd X_p$. 
Now, as a consequence of Theorem \ref{sLdich}, we deduce the following.

\begin{thm}\label{LevsemiLev}
$\{X_t\}_{t\ge 0}$ is recurrent if and only if $\{Y_t\}_{t\ge 0}$ is recurrent.
\end{thm}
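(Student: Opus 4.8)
The plan is to reduce Theorem~\ref{LevsemiLev} to Theorem~\ref{sLdich}(iii), which characterizes recurrence of the semi-L\'evy process $\{X_t\}$ by divergence of the potential integral $\int_0^\infty P(X_t\in B_a)\,dt$. Since $\{Y_t\}$ is itself a L\'evy process, by the classical dichotomy (Section~35 of \citet{Sato's_book1999}) it is recurrent if and only if $\int_0^\infty P(Y_t\in B_a)\,dt=\infty$ for every $a>0$; equivalently, since for L\'evy processes the continuous-time and discrete-time potentials are comparable, if and only if $\sum_{n=0}^\infty P(Y_n\in B_a)=\infty$ for every $a>0$, i.e. the random walk $\{Y_n\}_{n\in\Z_+}$ is recurrent. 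So it suffices to compare the recurrence of $\{X_t\}$ with that of the random walk $\{Y_n\}$, where $Y_1\eqd X_p$.

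The key observation is that the discrete-time skeleton $\{X_{np}\}_{n\in\Z_+}$ is exactly an ordinary random walk: by the semi-L\'evy property, $X_{(n+1)p}-X_{np}\eqd X_p-X_0=X_p$ for all $n$, and these increments are independent, so $\{X_{np}\}_{n\in\Z_+}\eqd\{Y_n\}_{n\in\Z_+}$ as processes. Hence $\{Y_t\}$ is recurrent $\iff$ the random walk $\{X_{np}\}_{n\in\Z_+}$ is recurrent. It therefore remains to show: $\{X_t\}_{t\ge 0}$ is recurrent $\iff$ $\{X_{np}\}_{n\in\Z_+}$ is recurrent. This is precisely the instance $h=p$ of Theorem~\ref{sLdich}(vii) (note $p\in(0,\infty)\cap p\Q$), once we observe that a "semi-random walk" with period~$1$ (the number of distinct increment laws per period) is just an ordinary random walk, so its recurrence coincides with random-walk recurrence. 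Thus the chain of equivalences closes:
$$
\{Y_t\}\text{ recurrent}\iff\{Y_n\}_{n\in\Z_+}\text{ recurrent}\iff\{X_{np}\}_{n\in\Z_+}\text{ recurrent}\iff\{X_t\}_{t\ge0}\text{ recurrent}.
$$

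The main obstacle is the bookkeeping at the two "interface" steps rather than any deep new idea. First, one must be careful that the equivalence between recurrence of a L\'evy process and recurrence of its integer skeleton $\{Y_n\}$ genuinely follows from the stated framework: this is standard (e.g.\ comparing $\int_n^{n+1}P(Y_t\in B_a)\,dt$ with $P(Y_n\in B_{2a})$ via the bound $P(Y_t\in B_a)\le P(Y_n\in B_{2a})+P(|Y_t-Y_n|\ge a)$ and stochastic continuity), and should be invoked cleanly. Second, one must make explicit that the semi-random walk $\{X_{nh}\}$ appearing in Theorem~\ref{sLdich}(vii), specialized to $h=p$, reduces to the i.i.d.\ random walk $\{X_{np}\}$ and that "recurrent" in the sense used there agrees with the classical notion for random walks; this is immediate from the definition of semi-random walk in Section~2 but deserves a sentence. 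Everything else is a direct concatenation of results already in hand.
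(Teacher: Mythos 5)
Your proposal is correct and follows essentially the same route as the paper: both reduce the statement via Theorem \ref{sLdich}(vii) with $h=p$ to the recurrence of the random walk $\{X_{np}\}_{n\in\Z_+}$, identify its law with that of $\{Y_n\}_{n\in\Z_+}$ (the paper does this through the divergence criterion of Theorem 35.3(ii) of \citet{Sato's_book1999}, you by noting the two random walks are equal in law, which is equivalent), and then invoke the classical equivalence between recurrence of the L\'evy process $\{Y_t\}_{t\ge 0}$ and of its skeleton $\{Y_n\}_{n\in\Z_+}$ (Theorem 35.4 of \citet{Sato's_book1999}).
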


Finally, as a consequence of Theorem \ref{LevsemiLev}, the criterion of 
Chung-Fuchs type immediately follows. 

\begin{cor}
Let $\{ X_t\}_{t\ge 0}$ be a semi-L\'evy process on $\rd$ 
with period $p>0$ and let $\psi(z) = \log \wh{\law(X_p)}(z)$.
Fix $a>0$.
Then the following three statements are equivalent.
\begin{enumerate}[\quad \rm (i)]
\item $\{X_t\}_{t\ge 0}$ is recurrent.
\item $\lim_{q\downarrow 0}\int_{B_a} {\rm Re}\left (\frac{1}{q-\psi(z)}\right ) dz =\infty.$
\item $\limsup_{q\downarrow 0}\int_{B_a} {\rm Re}\left (\frac{1}{q-\psi(z)}\right ) dz =\infty.$
\end{enumerate}
\end{cor}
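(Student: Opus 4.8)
The plan is to deduce the Corollary directly from Theorem \ref{LevsemiLev}, combined with the classical Chung--Fuchs recurrence criterion for L\'evy processes (see, e.g., Section 37 of \citet{Sato's_book1999}). First I would record that, since $X_p$ is infinitely divisible, its characteristic function $\wh{\law(X_p)}(z)$ never vanishes, so $\psi(z)=\log\wh{\law(X_p)}(z)$ is well defined as the distinguished continuous logarithm with $\psi(0)=0$. For the L\'evy process $\{Y_t\}_{t\ge 0}$ with $Y_1\eqd X_p$ introduced just before Theorem \ref{LevsemiLev}, the stationary-independent-increments property together with infinite divisibility gives $\wh{\law(Y_t)}(z)=e^{t\psi(z)}$ for all $t\ge 0$ and $z\in\rd$; that is, $\psi$ is exactly the characteristic exponent (L\'evy symbol) of $\{Y_t\}_{t\ge 0}$.

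With this identification in hand, the classical Chung--Fuchs criterion applied to $\{Y_t\}_{t\ge 0}$ with the fixed radius $a>0$ states that $\{Y_t\}_{t\ge 0}$ is recurrent if and only if $\lim_{q\downarrow 0}\int_{B_a}\mathrm{Re}\bigl(1/(q-\psi(z))\bigr)\,dz=\infty$, and equivalently with $\limsup$ in place of $\lim$; here one uses $\mathrm{Re}\,\psi(z)\le 0$, so that $q-\psi(z)$ has strictly positive real part and the integrand is nonnegative and locally integrable on $B_a$. Thus each of the statements (ii) and (iii) of the Corollary is equivalent to the recurrence of $\{Y_t\}_{t\ge 0}$.

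Finally, Theorem \ref{LevsemiLev} says that $\{X_t\}_{t\ge 0}$ is recurrent if and only if $\{Y_t\}_{t\ge 0}$ is recurrent, which chains with the previous paragraph to give the equivalence of (i), (ii), and (iii). Because the whole argument is a reduction, there is no genuine obstacle; the only points deserving a line of care are the well-definedness and branch choice for $\psi$, and the remark that the equivalence holds for the arbitrarily fixed $a$ rather than merely for some $a$ --- which is automatic, since recurrence of $\{Y_t\}_{t\ge 0}$ does not depend on $a$ while the Chung--Fuchs theorem supplies the equivalence for each fixed $a>0$ separately.
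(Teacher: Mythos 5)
Your argument is correct and is essentially the paper's own proof: both reduce the statement to the L\'evy process $\{Y_t\}_{t\ge 0}$ with $Y_1\eqd X_p$ via Theorem \ref{LevsemiLev} and then invoke the Chung--Fuchs criterion (Theorem 37.5 of \citet{Sato's_book1999}), whose characteristic exponent is exactly $\psi$. Your extra remarks on the branch choice for $\psi$ and the role of the fixed radius $a$ are fine but not a different method.
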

\begin{proof}
The claim of this corollary follows from the corresponding result for 
L\'evy processes (see, e.g., Theorem 37.5 of 
\citet{Sato's_book1999}) and Theorem \ref{LevsemiLev}.
\end{proof}

This paper is organized as follows.
In Section 2, we introduce the notion of semi-random walk and prove its recurrence and
transience dichotomy. 
In Section 3, we give proofs of Theorems \ref{sLdich} and \ref{LevsemiLev}.
In Section 4, we investigate the recurrence and transience property of the
semi-L\'evy process defined in Example \ref{example}.
Once we have investigated the behavior of sample paths as $t\to\infty$, it would be natural
to consider the laws of large numbers. 
Therefore, in Section 5, we discuss the laws of large
numbers for semi-L\'evy processes.
\vskip 5mm


\section{Semi-random walks}

Crucial step in proving the recurrence
and transience dichotomy of L\'evy processes is the recurrence and transience dichotomy of
random walks.
In order to prove Theorem \ref{sLdich}, we follow this idea, and, in this section, we
introduce the notion of semi-random walk and its recurrence and transience properties.

\begin{defn}
Fix $p\in \N$.
A sequence of $\rd$-valued random variables $\{S_n\}_{n\in\Z_+}$ is called a semi-random walk with 
period $p\in\N$, if $S_0=0$ a.s.,
$
S_n-S_m\,\,\text{and}\,\, S_k-S_l\,\,\text{are independent for any }n,m,k,l\in\Z_+
\text{ satisfying }n>m\ge k>l,
$
and 
$$
S_{n+p}-S_{m+p} \eqd S_n-S_m \q \text{for any}\,\, n,m\in \Z_+.
$$
\end{defn}
Let us remark that if $\{S_n\}_{n\in\Z_+}$ is a semi-random walk with period $p$, then 
$\{S_{np}\}_{n\in\Z_+}$ is a random walk.
\begin{defn}\label{semirw}
Let $m\in\Z_+$. A semi-random walk $\{S_n\}_{n\in\Z_+}$ on $\R^d$ 
is called \emph{$m$-recurrent} if
\begin{equation*}
\liminf_{n \to \infty} |S_n-S_m| = 0 \quad \text{a.s.},
\end{equation*}
and it is called \emph{recurrent} if it is $m$-recurrent for any $m\in\Z_+$.
It is called \emph{transient} if
\begin{equation*}
\lim_{n \to \infty} |S_n| = \infty \quad \text{a.s.}
\end{equation*}
\end{defn}
\begin{rem}
Note that, for random walks, by the stationary increment property, recurrence and 
$0$-recurrence are equivalent.
In Theorem \ref{semirwdich} (i) below we show that this is
also the case for semi-random walks.
\end{rem}

The following is an example of semi-random walk, which is constructed from
a semi-L\'evy process.

\begin{ex}
Suppose that $\{X_t\}_{t\ge 0}$ is a semi-L\'evy process with period $p>0$.
Let $h\in (0,\infty)\cap p\Q$.
Then $\{X_{nh}\}_{n\in\Z_+}$ is a semi-random walk with period $n_2\in \N$, 
where $n_2$ is such that $h=pn_1/n_2$, ($n_1, n_2\in \N$).
Indeed, for any $n,m\in\Z_+$,
$$
X_{(n+n_2)h} - X_{(m+n_2)h}
= X_{nh+pn_1} - X_{mh+pn_1} 
\eqd X_{nh} - X_{mh},
$$
since $\{X_t\}_{t\ge 0}$ is a semi-L\'evy process with period $p$.
\end{ex}

\begin{thm} \label{semirwdich}
Let $\{S_n\}_{n\in\Z_+}$ be a semi-random walk on $\mathbb{R}^d$ with period $p\in\N$. 
Then the following hold.
\begin{enumerate}[\quad \rm (i)]
\item It is recurrent if it is $0$-recurrent.
\item It is either recurrent or transient.
\item It is recurrent if and only if
\begin{equation}\label{semirec}
\sum^\infty_{n=1} P(S_n \in B_a) = \infty \quad \text{for every }a>0.
\end{equation}
\item It is transient if and only if
\begin{equation}
\sum^\infty_{n=1} P(S_n \in B_a) < \infty \quad \text{for every }a>0. \label{semitran}
\end{equation}
\item It is recurrent if and only if the random walk $\{S_{np}\}_{n\in\Z_+}$ is recurrent.
\end{enumerate}
\end{thm}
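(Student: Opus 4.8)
The plan is to prove Theorem \ref{semirwdich} by reducing everything to the classical dichotomy for the embedded random walk $\{S_{np}\}_{n\in\Z_+}$, which is genuinely a random walk (i.i.d.\ increments) by the remark following the definition of semi-random walk. The key structural observation is that for each fixed residue $r\in\{0,1,\dots,p-1\}$, the increments $S_{(n+1)p+r}-S_{np+r}$ are i.i.d., so $\{S_{np+r}\}_{n}$ is a random walk started from the independent "overshoot" $S_r$; moreover any $S_n$ with $n\equiv r$ differs from the corresponding point of $\{S_{np}\}$ by a term that, while not i.i.d., has a distribution that only depends on $r$ and is independent of the far future in a suitable sense.

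First I would prove (v), the main equivalence, since (i)--(iv) will then follow. For the "if" direction, suppose the random walk $T_n := S_{np}$ is recurrent. By Chung--Fuchs, $\sum_n P(T_n\in B_\ep)=\infty$ for all $\ep>0$. Fix $a>0$; I want to show $\{S_n\}$ returns near $0$ (and near $S_m$ for any $m$) infinitely often. Write $n = kp + r$. Decompose $S_n = T_k + (S_{kp+r}-S_{kp})$. The second term is independent of $T_k$ and, for fixed $r$, its law is fixed (call it $\nu_r$); it is also independent of the increments of $T$ after time $k$. One then shows that recurrence of $T$ forces $\{S_{np+r}\}_n$ to be recurrent for each $r$: indeed $S_{np+r} = T_n + \xi_{n}$ where $\xi_n$ has law $\nu_r$ and is independent of $(T_m - T_n)_{m\ge n}$; a standard argument (as in the Lévy/random-walk theory, e.g.\ the "independent perturbation" lemma underlying Theorem 35.4 of \citet{Sato's_book1999}) gives that $T_n$ recurrent implies $T_n+\xi_n$ recurrent, because the conditional probability that $T_{n+j}-T_n$ lands in a ball around $-\xi_n$ for some $j$ is bounded below uniformly. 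Taking $r$ ranging over all residues and over shifts by $m$, we get $m$-recurrence for every $m$, hence recurrence.

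For the "only if" direction of (v): if $\{S_n\}$ is recurrent, in particular it is $0$-recurrent, so $\liminf_n |S_n| = 0$. I would argue that $\liminf_k |S_{kp}| = 0$ as well. This is where a little care is needed: a priori the subsequence along which $|S_n|\to 0$ could avoid the multiples of $p$. The fix is to use that $S_{kp+r} - S_{kp}$ and $S_{(k+1)p} - S_{kp+r}$ are independent (disjoint index blocks), together with the fact that, by the semi-random walk structure, $S_{(k+1)p}-S_{kp}$ is a fixed infinitely divisible-type... no, simply a fixed law; so if $S_{kp+r}$ is small infinitely often for some $r$, then with positive conditional probability (bounded below, independent of $k$) $S_{(k+1)p}$ is within a fixed radius, and a Borel--Cantelli / Hewitt--Savage $0$--$1$ argument on the random walk $T$ promotes this to $\liminf_k|T_k| = 0$. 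Then classical theory gives $T$ recurrent.

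Once (v) holds, (ii) follows because the random walk $\{S_{np}\}$ satisfies the random-walk dichotomy, so it is recurrent or transient, and I must check that "not recurrent" for $\{S_n\}$ is equivalent to "$\lim_n|S_n|=\infty$"; the non-trivial direction uses that if $T$ is transient then $|T_k|\to\infty$, and each $S_{kp+r} = T_k + \xi$ with $\xi$ of fixed law $\nu_r$, so $|S_{kp+r}|\to\infty$ for each of the finitely many $r$, hence $|S_n|\to\infty$. For (i), $0$-recurrence of $\{S_n\}$ gives recurrence of $T$ by the argument above, and recurrence of $T$ gives full recurrence of $\{S_n\}$ by the "if" direction, closing the loop. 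For (iii) and (iv): by (ii) it suffices to prove one implication each. If $\{S_n\}$ is recurrent then $T$ is recurrent, so $\sum_k P(T_k\in B_{a/2}) = \infty$; since $P(S_{kp+r}\in B_a) \ge P(T_k \in B_{a/2})\,\nu_r(B_{a/2})$ and $\nu_r(B_{a/2})>0$ for $a$ large — handle small $a$ by first shrinking — summing over $k$ and $r$ gives \eqref{semirec}. Conversely, if \eqref{semirec} holds then in particular $\sum_n P(S_{np}\in B_a)=\infty$, i.e.\ $\sum_k P(T_k\in B_a)=\infty$ for all $a$, so $T$ is recurrent by Chung--Fuchs, hence $\{S_n\}$ is recurrent by (v). Statement (iv) is the contrapositive of (iii) using the dichotomy (ii).

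**The main obstacle** I anticipate is the "only if" part of (v): passing from $\liminf_n|S_n|=0$ along an unrestricted subsequence to $\liminf_k|S_{kp}|=0$ along multiples of the period. The perturbations $S_{kp+r}-S_{kp}$ are independent of the block $[kp+r,\,(k+1)p]$ but are \emph{not} independent across different $k$ (they share no indices, actually they \emph{are} independent across $k$ for fixed $r$ — the blocks $(kp, kp+r]$ are disjoint), so the cleanest route is: for each $r$ separately, show $\{S_{np+r}\}_n$ is a random walk perturbed by an independent fixed-law term, invoke the random-walk recurrence/transience dichotomy for it (it embeds the same $T$ up to a shift), and conclude. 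I would isolate this as a lemma: \emph{if $T_n$ is a random walk and $\xi_n$ are random variables with $\xi_n$ independent of $(T_m-T_n)_{m\ge n}$ and $\sup_n E[\,\dots\,]$ controlled, then $\{T_n\}$ recurrent $\iff$ $\{T_n+\xi_n\}$ recurrent} — this is the semi-random-walk analogue of the standard fact and is the technical heart of the section.
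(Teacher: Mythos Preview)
Your high-level strategy---reduce to the embedded random walk $T_n = S_{np}$---matches the paper's, but the execution has a real gap. In the decomposition $S_{np+r} = T_n + \xi_n$ with $\xi_n := S_{np+r} - S_{np}$, your claim that $\xi_n$ is ``independent of $(T_m - T_n)_{m\ge n}$'' is false: $\xi_n$ lives on the index block $(np, np+r]$, while $T_{n+1}-T_n = S_{(n+1)p}-S_{np}$ lives on $(np,(n+1)p]$, so $\xi_n$ is literally a summand of $T_{n+1}-T_n$ and cannot be independent of it unless it is degenerate. Hence your proposed perturbation lemma does not apply as stated, and both directions of (v) break. Relatedly, in your converse to (iii) you assert that \eqref{semirec} (a sum over \emph{all} $n$) implies $\sum_k P(T_k\in B_a)=\infty$ (a sum over multiples of $p$); this is not immediate---a priori the divergence could come entirely from a nonzero residue class.

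The missing ingredient, and the paper's key observation, is a different decomposition: for each residue $l\in\{0,\dots,p-1\}$, the process $\{S_{np+l} - S_l\}_{n\in\Z_+}$ is a genuine random walk with the \emph{same} step law as $\{S_{np}\}_{n\in\Z_+}$. Indeed, using independence and $S_{p+l}-S_p\eqd S_l$,
\[
S_{p+l}-S_l = (S_{p+l}-S_p) + (S_p-S_l) \eqd S_l + (S_p-S_l) = S_p.
\]
With this identity no perturbation lemma is needed: if pigeonholing gives $\sum_n P(S_{np+l}\in B)=\infty$ for some $l$ and some small ball $B$, a last-exit argument forces $P(|S_{np+l}-S_l|\ge\eta\ \text{for all }n)=0$, and the identity transfers this verbatim to $\{S_{np}\}$. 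The paper packages this as the equivalence $\eqref{semitran}\Leftrightarrow\{S_n\}$ transient $\Leftrightarrow\{S_{np}\}$ transient (its Lemma~\ref{semi}), from which (i)--(v) follow in a few lines.
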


In order to prove Theorem \ref{semirwdich}, we need the following lemma.

\begin{lem}\label{semi}
Let $\{S_n\}_{n\in\Z_+}$ be a semi-random walk on $\mathbb{R}^d$ with period $p\in\N$. 
The following three statements are equivalent.
\begin{enumerate}[\quad \rm (1)]
\item \eqref{semitran} holds.
\item $\{S_n\}_{n\in\Z_+}$ is transient.
\item $\{S_{np}\}_{n\in\Z_+}$ is transient.
\end{enumerate}
\end{lem}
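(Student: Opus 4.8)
The plan is to establish the cycle of implications $(3)\Rightarrow(1)\Rightarrow(2)\Rightarrow(3)$. Two of the three links are essentially free. For $(2)\Rightarrow(3)$: if $\lim_{n\to\infty}|S_n|=\infty$ a.s., the same holds along the subsequence $\{np\}_{n\in\Z_+}$, so the random walk $\{S_{np}\}_{n\in\Z_+}$ is transient. For $(1)\Rightarrow(2)$: assuming \eqref{semitran}, apply it with $a=k\in\N$ and use the Borel--Cantelli lemma to get $P(S_n\in B_k\text{ infinitely often})=0$, i.e.\ a.s.\ $|S_n|\ge k$ for all large $n$; intersecting these probability-one events over $k\in\N$ gives $\liminf_{n\to\infty}|S_n|=\infty$ a.s., which is transience of $\{S_n\}_{n\in\Z_+}$. (This step uses nothing about the semi-random-walk structure.)

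The substantive implication is $(3)\Rightarrow(1)$. Write each $m\ge 1$ as $m=np+r$ with $n\in\Z_+$ and $r\in\{0,1,\dots,p-1\}$. By the defining independence property of semi-random walks applied to the differences $S_m-S_{np}$ and $S_{np}-S_0$, the random vectors $S_{np}$ and $S_m-S_{np}$ are independent; and applying the relation $S_{n'+p}-S_{m'+p}\eqd S_{n'}-S_{m'}$ exactly $n$ times shows $S_m-S_{np}\eqd S_r$. Writing $U(A):=\sum_{n\in\Z_+}P(S_{np}\in A)$ for the potential measure of the random walk $\{S_{np}\}_{n\in\Z_+}$ and $\mu_r:=\law(S_r)$, we obtain for every $a>0$
\[
\sum_{m=1}^{\infty}P(S_m\in B_a)
\le\sum_{r=0}^{p-1}\sum_{n\in\Z_+}\int_{\rd}P(S_{np}\in B_a-y)\,\mu_r(dy)
=\sum_{r=0}^{p-1}\int_{\rd}U(B_a-y)\,\mu_r(dy)
\le p\,\sup_{x\in\rd}U(x+B_a).
\]
Hence $(3)\Rightarrow(1)$ reduces to the statement that a transient random walk has potential measure uniformly bounded on balls: $\sup_{x\in\rd}U(x+B_a)<\infty$ for each $a>0$.

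I would isolate that uniform bound as an auxiliary fact about random walks, proved by a short strong-Markov argument. Fix $x\in\rd$ and set $\sigma:=\inf\{n\in\Z_+:S_{np}\in x+B_a\}$; then
\[
U(x+B_a)=E\Big[\1_{\{\sigma<\infty\}}\sum_{n\ge\sigma}\1_{x+B_a}(S_{np})\Big]
\le\sup_{y\in x+B_a}E\Big[\sum_{n\in\Z_+}\1_{x+B_a}(y+S_{np})\Big]
=\sup_{y\in x+B_a}U\big((x-y)+B_a\big)\le U(B_{2a}),
\]
where the first inequality uses the strong Markov property of $\{S_{np}\}_{n\in\Z_+}$ at $\sigma$, and the last uses $x-y\in B_a$ whenever $y\in x+B_a$; finally $U(B_{2a})<\infty$ because $\{S_{np}\}_{n\in\Z_+}$ is transient (by the recurrence--transience dichotomy for random walks, transience forces the potential of every ball to be finite).

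The main obstacle is precisely this uniform potential estimate, together with the bookkeeping over the $p$ residue classes modulo $p$ in the displayed chain; the remaining implications are a triviality and a Borel--Cantelli argument. I note that, once one has passed to the random walk $\{S_{np}\}_{n\in\Z_+}$, one could alternatively quote the corresponding potential-theoretic facts for L\'evy processes and random walks from Section 35 of \citet{Sato's_book1999} in place of the hands-on argument above.
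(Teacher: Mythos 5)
Your proof is correct, but for the substantive implication it takes a genuinely different route from the paper. The easy links coincide: your (1)$\Rightarrow$(2) is the same Borel--Cantelli argument and your (2)$\Rightarrow$(3) is the same subsequence observation. For the key step, however, you prove (3)$\Rightarrow$(1) directly: you decompose $m=np+r$, use independence and the periodicity relation to get $S_{np+r}-S_{np}\eqd S_r$, and reduce everything to the uniform potential bound $\sup_{x\in\rd}U(x+B_a)\leq U(B_{2a})$ for the transient random walk $\{S_{np}\}_{n\in\Z_+}$, proved by a strong Markov (first-entrance) argument and the standard fact that transience of a random walk forces $U(B_{2a})<\infty$. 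The paper instead proves the contrapositive: assuming $\sum_n P(S_n\in B_a)=\infty$ for some $a$, it extracts a residue class $l_1$ with divergent sum, covers the closed ball by small balls to find a ball $B$ of radius $\eta/2$ visited with divergent potential, and then uses a last-exit decomposition together with the factorization coming from the independent-increment structure to conclude $P(|S_{np+l_1}-S_{l_1}|\geq\eta\ \text{for all }n)=0$; transferring this to $\{S_{np}\}$ via $S_{p+l_1}-S_{l_1}\eqd S_p$ and letting $\eta\downarrow 0$ yields $0$-recurrence of $\{S_{np}\}$, contradicting (3). Your approach is shorter and gives the quantitative estimate $\sum_m P(S_m\in B_a)\leq p\,U(B_{2a})$, at the price of importing the random-walk potential criterion (Theorem 35.3 of \citet{Sato's_book1999}) and the strong Markov property inside the lemma, whereas the paper's proof of the lemma is self-contained and only invokes Sato's random-walk dichotomy later, in the proof of Theorem \ref{semirwdich}. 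Note also that both arguments (your strong Markov step, the paper's product factorization of the last-exit event) use the full independent-increments reading of the semi-random-walk definition, i.e., joint independence of increments over disjoint index intervals, not merely the pairwise independence literally stated; so you are not assuming more than the paper does. Two small points to tidy: when invoking the defining independence for $S_{np+r}-S_{np}$ and $S_{np}$ you should treat the trivial cases $r=0$ and $n=0$ separately, and in the strong Markov step it is worth stating explicitly that on $\{\sigma<\infty\}$ one has $S_{\sigma p}\in x+B_a$, so that $x-S_{\sigma p}\in B_a$ and the shifted potential is dominated by $U(B_{2a})$.
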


\begin{proof}	
(1)$\Rightarrow$(2). 
Suppose \eqref{semitran}. 
Then by the Borel-Cantelli lemma, we have\\
$P(\limsup_{n \to \infty} \{|S_n| <a \}) = 0$,
namely, $P(\text{$\exists m$ such that $|S_n| \ge a$ for all $n \ge m$})=1$.
Since $a$ is arbitrary, $\{S_n\}_{n\in\Z_+}$ is transient.
	
(2)$\Rightarrow$(3). 
This follows from the definition of transience of semi-random walk in Definition \ref{semirw},
since $\{S_{np}\}_{n\in\Z_+}$ is a subsequence of $\{S_n\}_{n\in\Z_+}$.
	
(3)$\Rightarrow$(1).
Suppose
$$
 \sum^\infty_{n=1} P(S_n \in B_a) = \infty \quad \text{for some} \;\, a>0.
$$
It follows that
$$
\sum^{p-1}_{l=0} 
\sum^\infty_{n=0} P(S_{np+l} \in B_a) 
=\sum^\infty_{n=0} P(S_n \in B_a) = \infty. 
$$
Hence there is a number $l_1 \in \{0,1,\dots,p-1\}$ such that
$$
 \sum^\infty_{n=0} P(S_{np+l_1} \in B_a) = \infty. 
$$
Let $K=\{x \in \mathbb{R}^d : |x| \le a\}$. Since $B_a \subset K$,
$$
\sum^\infty_{n=0} P(S_{np+l_1} \in K) \ge \sum^\infty_{n=0} P(S_{np+l_1} \in B_a) = \infty.
$$
Fix $\eta >0$ arbitrarily. Since $K$ is compact, it is covered by a finite number of open balls 
with radius $\eta/2$. Hence, there is an open ball $B$ with radius $\eta/2$ such that
\begin{equation}
\sum^\infty_{n=0} P(S_{np+l_1} \in B) = \infty. \label{potential1}
\end{equation}
We have
\begin{align*}
1&\ge P\Biggl( \bigcup^\infty_{k=0} \bigl\{S_{kp+l_1} \in B, \; S_{(k+n)p+l_1} 
\notin B \,\; \text{for all} \; n\in\N \bigr\} \Biggr)\\
&=\sum^\infty_{k=0} P(S_{kp+l_1} \in B, \; S_{(k+n)p+l_1} \notin B \,\; 
\text{for all} \; n\in\N ) \\
&\ge \sum^\infty_{k=0} P(S_{kp+l_1} \in B, \; |S_{(k+n)p+l_1} - S_{kp+l_1}| \ge \eta 
\,\; \text{for all} \; n\in\N)\\
&=P(|S_{np+l_1} - S_{l_1}| \ge \eta \,\; \text{for all} \; n\in\N) 
\sum^\infty_{k=0} P(S_{kp+l_1} \in B)
\end{align*}
by the definition of semi-random walk. By (\ref{potential1}), we have
\begin{equation}\label{eta'}
P(|S_{np+l_1} - S_{l_1}| \ge \eta \,\; \text{for all} \; n\in\N) =0.
\end{equation}
Since $S_{p+l_1}-S_{p}$, $S_{p}-S_{l_1}$ and $S_{l_1}$ are independent and
$S_{p+l_1}-S_{p}\eqd S_{l_1}$,
we have
\begin{align*}
S_{p+l_1} - S_{l_1} &= (S_{p+l_1}-S_{p})+(S_{p}-S_{l_1})  \\
&\eqd S_{l_1}+(S_{p}-S_{l_1})=S_{p}.
\end{align*}
Since $\{S_{np+l_1} - S_{l_1}\}_{n\in\Z_+}$ and $\{S_{np}\}_{n\in\Z_+}$ are random walks
with the same law at $n=1$, we have $\{S_{np+l_1} - S_{l_1}\}_{n\in\Z_+}\eqd \{S_{np}\}_{n\in\Z_+}$.
Therefore, by \eqref{eta'},
\begin{equation}\label{eta}
P(|S_{np}| \ge \eta \,\; \text{for all} \; n\in\N) =0.
\end{equation}
Then for $k\in\Z_+$ and for $0<\eta<\ep$,
\begin{align*}
&P(|S_{kp}| < \ep - \eta, \; |S_{(k+n)p}| \ge \ep \;\, \text{for all} \; n\in\N)\\
&\qquad\le P(|S_{kp}|< \ep -\eta, \; |S_{(k+n)p}-S_{kp}| \ge 
\eta \,\; \text{for all} \; n\in\N)\\
&\qquad=   P(|S_{kp}|< \ep -\eta) P(|S_{np}| \ge \eta \,\; \text{for all} \; n\in\N) =0,
\end{align*}
by the definition of semi-random walk and \eqref{eta}. 
Letting $\eta \downarrow 0$, we have 
\begin{equation}\label{k>=1}
P(|S_{kp}| < \ep, \; |S_{(k+n)p}| \ge \ep \,\; \text{for all} \; n\in\N)=0
\end{equation}
for all $k \in\Z_+$.
Using \eqref{k>=1}, we have
\begin{align*}
P(\exists m \in\N \;\, & \text{such that} \; |S_{np}|\ge\ep \,\; \text{for all} \; n \ge m )\\
&= \sum^\infty_{k=0}  P(|S_{kp}| < \ep, \; |S_{(k+n)p}| \ge \ep \,\; \text{for all} \; n\in\N)=0,
\end{align*}
namely, $P( \forall m \in\N , \; \exists n \ge m \,\;\text{such that} \; |S_{np}| < \ep ) = 1$.
Hence $\{S_{np}\}_{n\in\Z_+}$ is $0$-recurrent, implying that $\{S_{np}\}_{n\in\Z_+}$ is not transient.
This completes the proof of the equivalence of (1)--(3). 
\end{proof}

\begin{proof}[Proof of Theorem \ref{semirwdich}]
(i) Suppose that $\{S_n\}_{n\in\Z_+}$ is not $0$-recurrent.
Then the subsequence $\{S_{np}\}_{n\in\Z_+}$ is not $0$-recurrent.
Then by Theorem 35.3 (i) of \citet{Sato's_book1999}, $\{S_{np}\}_{n\in\Z_+}$ is transient.
By the equivalence of (2) and (3) of Lemma \ref{semi}, $\{S_n\}_{n\in\Z_+}$ is transient.
Hence $\{S_n\}_{n\in\Z_+}$ is either $0$-recurrent or transient.
If $\{S_n\}_{n\in\Z_+}$ is $0$-recurrent, then, for every $m\in\Z_+$,
\begin{equation}\label{0-rec}
\liminf_{n\to\infty} |S_{n+m}-S_m|
\leq \liminf_{n\to\infty} |S_{n+m}|+|S_m|
=|S_m|<\infty\quad\text{a.s.}
\end{equation}
Since $\{S_{n+m}-S_m\}_{n\in\Z_+}$ is a semi-random walk with period $p$,
it is either $0$-recurrent or transient.
By \eqref{0-rec}, we have $\liminf_{n\to\infty} |S_{n+m}-S_m|=0$ a.s.
Thus $\{S_n\}_{n\in\Z_+}$ is recurrent.

(ii) This is proved in the proof of (i).

(iv) The claim follows from the equivalence of (1) and (2) of Lemma \ref{semi}.

(iii) Suppose $\sum^\infty_{n=1} P(S_n \in B_a) < \infty$ for some $a>0$.
Then by the Borel-Cantelli lemma, 
$P( \exists m \,\; \text{such that} \; |S_n| \ge a \;\, \text{for all} \; n \ge m) = 1$,
implying that $\{S_n\}_{n\in\Z_+}$ is not recurrent.
Next suppose \eqref{semirec}.
Then by (iv) and (ii), $\{S_n\}_{n\in\Z_+}$ is recurrent.

(v) The claim follows from (ii)
and the equivalence of (2) and (3) of Lemma \ref{semi}.
\end{proof}

\vskip 5mm

\section{Proofs of main results}
In this section, we prove Theorems \ref{sLdich} and \ref{LevsemiLev}.
We start with the following lemma, which is a generalization of Lemma 35.5 of \citet{Sato's_book1999}.

\begin{lem}\label{lemmaking}
For any semi-L\'evy process $\{X_t\}_{t\ge 0}$ and any $a>0$, there is a function 
$\gamma(a,\varepsilon)$ satisfying $\gamma(a,\varepsilon)\to1$ as $\varepsilon\downarrow 0$, 
such that,
for every $t>0$ and $\varepsilon>0$,
$$
P\left(\int_t^\infty \1_{B_{2a}}(X_s)ds>\varepsilon\right)
\geq \gamma(a, \varepsilon)P\left(|X_{t+s}|<a\text{ for some }s>0\right).
$$
\end{lem}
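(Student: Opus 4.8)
The plan is to mimic the classical argument for L\'evy processes (Lemma 35.5 of \citet{Sato's_book1999}), the only subtlety being that a semi-L\'evy process does not have stationary increments, so the conditional law of increments after time $t$ depends on $t$ modulo the period $p$. First I would fix $a>0$ and $t>0$, and introduce the stopping-type event $A_s := \{|X_{t+s}|<a\}$ together with the first time the process enters $B_a$ after $t$, say $\tau := \inf\{s>0 : |X_{t+s}|<a\}$, on the event $\{\tau<\infty\} = \{|X_{t+s}|<a \text{ for some }s>0\}$. On this event $|X_{t+\tau}|\le a$ (using right-continuity of paths and closedness issues handled as in the L\'evy case, possibly passing to $|X_{t+\tau}|\le a$ rather than $<a$), and for any $s>\tau$ with $|X_{t+s}-X_{t+\tau}|<a$ we get $|X_{t+s}|<2a$, i.e. $X_{t+s}\in B_{2a}$.

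Next I would estimate, on $\{\tau<\infty\}$, the quantity $\int_\tau^\infty \1_{B_{2a}}(X_{t+s})\,ds$ from below by $\int_\tau^\infty \1_{\{|X_{t+s}-X_{t+\tau}|<a\}}\,ds$. The key point is that the increments $\{X_{t+\tau+u}-X_{t+\tau}\}_{u\ge 0}$ are independent of $\mathcal F_{t+\tau}$ (hence of $\tau$ and of $X_{t+\tau}$) by the independent-increments property, and by stochastic continuity the occupation time $\int_0^\delta \1_{\{|X_{t+\tau+u}-X_{t+\tau}|<a\}}\,du$ is positive with probability close to $1$ for $\delta$ small; more precisely, for any $\varepsilon>0$ one chooses $\delta$ so that $P(\int_0^\delta \1_{\{|Y_u|<a\}}\,du \le \varepsilon)$ is small, where $Y$ ranges over all possible post-$(t+\tau)$ increment processes. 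This is exactly where the semi-L\'evy structure is needed: the law of $\{X_{t+\tau+u}-X_{t+\tau}\}_{u\ge 0}$ depends on $t+\tau$ only through its residue mod $p$, and by stochastic continuity (applied uniformly over the compact residue set $[0,p]$) one obtains a single function $\gamma(a,\varepsilon)\to 1$ as $\varepsilon\downarrow 0$ that works for all $t$ simultaneously. I would therefore set $\gamma(a,\varepsilon) := \inf_{r\in[0,p]} P\big(\int_0^{\delta(r)} \1_{\{|X^{(r)}_u|<a\}}\,du > \varepsilon\big)$ for a suitable $\delta$, where $X^{(r)}$ denotes the increment process started from phase $r$, and verify $\gamma(a,\varepsilon)\to 1$.

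Assembling these pieces: conditioning on $\mathcal F_{t+\tau}$ and using independence of increments,
\begin{align*}
P\left(\int_t^\infty \1_{B_{2a}}(X_s)\,ds>\varepsilon\right)
&\ge P\left(\tau<\infty,\ \int_0^{\delta}\1_{\{|X_{t+\tau+u}-X_{t+\tau}|<a\}}\,du>\varepsilon\right)\\
&\ge \gamma(a,\varepsilon)\,P(\tau<\infty)
= \gamma(a,\varepsilon)\,P\left(|X_{t+s}|<a\text{ for some }s>0\right),
\end{align*}
which is the claimed inequality. The main obstacle I anticipate is the rigorous treatment of $\tau$: one must justify that $\tau$ is a stopping time with respect to a suitable (right-continuous, augmented) filtration, that the strong-Markov-type independence of future increments from $\mathcal F_{t+\tau}$ holds for additive processes (this is standard but needs the filtration set up correctly), and that $|X_{t+\tau}|\le a$ on $\{\tau<\infty\}$ — the last point requires a small limiting argument using right-continuity of paths and the fact that $B_a$ is open, handled exactly as in the L\'evy case, with the harmless replacement of $a$ by $2a$ in the target ball absorbing the boundary issue. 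A secondary technical point is the uniformity in $t$ of the stochastic-continuity estimate, which is where compactness of the phase space $[0,p]$ and the fact that only finitely many "pieces" of the semi-L\'evy law occur per period are used.
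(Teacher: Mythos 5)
Your overall strategy (first passage after time $t$, then a uniform-in-phase lower bound on the occupation of $B_{2a}$ right after the passage) is the classical one, but as written it leaves unproved exactly the two load-bearing steps, and the first of them is the point the paper's proof is engineered to avoid. You condition on $\mathcal F_{t+\tau}$ at the continuous first-passage time $\tau$ and assert that the post-$\tau$ increments are independent of $\mathcal F_{t+\tau}$ with law determined by the phase of $t+\tau$ modulo $p$. For a semi-L\'evy (hence time-inhomogeneous) process this is a strong Markov property at a random time, in which the conditional law of $\{X_{t+\tau+u}-X_{t+\tau}\}_{u\ge0}$ genuinely depends on the value of $\tau$; it is not citable off the shelf in this paper, and its proof is essentially the dyadic approximation of $\tau$ from the right. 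The paper never stops the process at a random time: it first proves the bound \eqref{ineq} for a deterministic $t$ and an arbitrary $\Lambda\in\mathcal F_t$ with $\Lambda\subset\{|X_t|<a\}$, then decomposes the hitting event along the grid $\{t+2^{-k}n\}_{n\in\Z_+}$ according to the first grid index with $|X_{t+2^{-k}n}|<a$ (each piece lies in $\mathcal F_{t+2^{-k}n}$, so only independence of increments at deterministic times is used), sums, and finally lets $k\to\infty$ using denseness of the dyadics, openness of $B_a$ and right-continuity of paths. So the discretization is the missing idea in your plan, not a dispensable technicality; if you keep the continuous $\tau$, you must supply the inhomogeneous strong Markov property, whose proof is this same discretization.

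The second gap is quantitative: you claim stochastic continuity makes $P\bigl(\int_0^\delta \1_{\{|Y_u|<a\}}du>\varepsilon\bigr)$ uniformly close to $1$. Since the occupation time over $[0,\delta]$ is at most $\delta$, you must take $\delta>\varepsilon$, and ``the occupation time is positive with high probability'' is strictly weaker than what is needed. The paper obtains the bound by an averaging trick: with $Y:=(2\varepsilon)^{-1}\int_t^{t+2\varepsilon}\1_{B_{2a}}(X_s)ds\in[0,1]$ one has $2E[Y\,|\,\Lambda]\le P(Y>1/2\,|\,\Lambda)+1$, and $E[Y\,|\,\Lambda]$ is bounded below by $\int_0^1P(|X_{t+2\varepsilon s}-X_t|<a)ds$ using $\Lambda\subset\{|X_t|<a\}$ and independence; periodicity replaces $t$ by $t-[t/p]p\in[0,p)$, and uniform stochastic continuity on $[0,p+1]$ plus bounded convergence give $\gamma(a,\varepsilon)\to1$ with the single choice $\delta=2\varepsilon$, which also removes the awkward phase-dependent $\delta(r)$ (and the attendant measurability issue) in your definition of $\gamma$. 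Both gaps are fixable, but the proof is incomplete precisely at these two steps.
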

\begin{proof}
Let $\mathcal F_t$ be the $\sigma$-field generated by $\{X_s\}_{s\in[0,t]}$
and take $\Lambda\in\mathcal F_t$ such that
$\Lambda\subset \{|X_t|<a\}$ and $P(\Lambda)>0$.
Consider
$$
Y:=\frac 1{2\varepsilon}\int_t^{t+2\varepsilon}\1_{B_{2a}}(X_s)ds.
$$
Then $0\leq Y\leq 1$ and $2E[Y|\Lambda]\leq P(Y>1/2\,|\,\Lambda)+1$.
Hence, we have
\begin{align*}
P&\left(\int_t^{t+2\varepsilon} \1_{B_{2a}}(X_s)ds>\varepsilon\biggm| \Lambda\right)\\
& \hskip 10mm \geq \frac 1\varepsilon E\left[\int_t^{t+2\varepsilon}
\1_{B_{2a}}(X_s)ds\biggm| \Lambda\right]-1\\
& \hskip 10mm =2 \int_0^1P\left(|X_{t+2\varepsilon s}|<2a\,|\,\Lambda\right)ds-1\\
& \hskip 10mm \geq2 \int_0^1P\left(|X_{t+2\varepsilon s}-X_t|<a\,|\,\Lambda\right)ds-1\\
& \hskip 10mm =2 \int_0^1P\left(|X_{t+2\varepsilon s}-X_t|<a\right)ds-1\\
& \hskip 10mm =2\int_0^1P\left(|X_{t-[t/p]p+2\varepsilon s}-X_{t-[t/p]p}|<a\right)ds-1\\
& \hskip 10mm \geq2\int_0^1\inf_{t\in[0,p)}P\left(|X_{t+2\varepsilon s}-X_t|
<a\right)ds-1=:\gamma(a,\varepsilon),
\end{align*}
where $[x]$ denotes the largest integer not greater than $x\in\R$.
Note that a stochastically continuous process is uniformly stochastically continuous 
on any compact interval $[0,t_0]$,
i.e., $\lim_{\delta\downarrow 0}\sup_{u,v\in[0,t_0],|u-v|< \delta}P(|X_u-X_v|>a)=0$ 
for each $a>0$ (see, e.g., Lemma 9.6 of \citet{Sato's_book1999}).
Hence, as $(0,1)\ni\delta\downarrow 0$,
$$
\inf_{t\in[0,p)}P\left(|X_{t+\delta}-X_t|<a\right)\geq 
\inf_{u,v\in[0,p+1],|u-v|\leq \delta}P(|X_u-X_v|<a)\to 1.
$$
Therefore $\lim_{\delta\downarrow 0}\inf_{t\in[0,p)}P\left(|X_{t+\delta}-X_t|<a\right)=1$.
Using the bounded convergence theorem, we thus have 
$\lim_{\varepsilon\downarrow 0}\gamma(a,\varepsilon)=1$.
Hence
\begin{equation}\label{ineq}
P\left(\Lambda\cap\left\{\int_t^\infty\1_{B_{2a}}(X_s)ds>
\varepsilon\right\}\right)\geq\gamma(a,\varepsilon)P(\Lambda)
\end{equation}
for any $\Lambda\in\mathcal F_t$ satisfying
$\Lambda\subset \{|X_t|<a\}$.
For each $k\in\N$,
\begin{align*}
P&\left(\int_t^\infty\1_{B_{2a}}(X_s)ds>\varepsilon\right)\\
&\geq\sum_{n=0}^\infty P\left(|X_{t+2^{-k}j}|\geq a\text{ for }0\leq j<n, 
|X_{t+2^{-k}n}|<a,\int_{t+2^{-k}n}^\infty\1_{B_{2a}}(X_s)ds>\varepsilon\right)\\
&\geq\gamma(a,\varepsilon)\sum_{n=0}^\infty P\left(|X_{t+2^{-k}j}|\geq a\text{ for }0\leq j<n, 
|X_{t+2^{-k}n}|<a\right)\\
&=\gamma(a,\varepsilon)P\left(|X_{t+2^{-k}n}|<a\text{ for some }n\in\Z_+\right),
\end{align*}
where we have used the inequality \eqref{ineq}. 
Letting $k\to\infty$, we have
\begin{align*}
P\left(\int_t^\infty\1_{B_{2a}}(X_s)ds>\varepsilon\right)
&\geq \gamma(a, \varepsilon)P\left(|X_{t+2^{-k}n}|<a\text{ for some }k,n\in\Z_+\right)\\
&= \gamma(a, \varepsilon)P\left(|X_{t+s}|<a\text{ for some }s>0\right),
\end{align*}
where we have used the denseness of $\{2^{-k}n\colon k,n\in\Z_+\}$ in $[0,\infty)$ 
and the right-continuity of the sample paths of $\{X_t\}_{t\ge 0}$.
\end{proof}

We also need the following lemma.

\begin{lem}\label{semiLevylemma}
Let $\{X_t\}_{t\ge 0}$ be a semi-L\'evy process on $\rd$ with period $p>0$.
Fix $a>0$. Then the following statements are equivalent.
\begin{enumerate}[\quad \rm (1)]
\item $\{X_t\}_{t\ge 0}$ is $0$-recurrent.
\item $\int^\infty_0 \1_{B_a}(X_t)dt = \infty$ a.s.
\item $\int^\infty_0 P(X_t \in B_a)dt = \infty.$
\item There exists $h_0>0$ such that, for any $h\in(0,h_0]\cap p\Q$, 
the semi-random walk $\{X_{nh}\}_{n\in\Z_+}$ is recurrent.
\end{enumerate}
\end{lem}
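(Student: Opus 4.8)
The plan is to prove the cycle $(1)\Rightarrow(2)\Rightarrow(3)\Rightarrow(4)\Rightarrow(1)$, leaning on Lemma \ref{lemmaking} for the step that converts a pointwise occupation statement into a probabilistic one, and on Theorem \ref{semirwdich} (the semi-random-walk dichotomy) for the discretization step. First I would establish $(1)\Rightarrow(2)$. Assuming $0$-recurrence, $\liminf_{t\to\infty}|X_t|=0$ a.s., so for every $n$ there is a.s. a time $t>n$ with $|X_t|<a$; hence $P(|X_{t+s}|<a\text{ for some }s>0)=1$ for every $t$. Lemma \ref{lemmaking} then gives $P(\int_t^\infty\1_{B_{2a}}(X_s)\,ds>\varepsilon)\ge\gamma(a,\varepsilon)$ for all $t$ and all $\varepsilon>0$; letting $t\to\infty$ along integers and using that $\int_t^\infty\1_{B_{2a}}(X_s)\,ds$ is nonincreasing in $t$, the tail event $\{\int_0^\infty\1_{B_{2a}}(X_s)\,ds=\infty\}$ has probability at least $\gamma(a,\varepsilon)$, and sending $\varepsilon\downarrow 0$ forces this probability to be $1$. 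Since $a>0$ was arbitrary (so $2a$ may be replaced by $a$), we get (2). The implication $(2)\Rightarrow(3)$ is immediate by taking expectations and Tonelli's theorem: $\infty=E[\int_0^\infty\1_{B_a}(X_t)\,dt]=\int_0^\infty P(X_t\in B_a)\,dt$.

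Next I would handle $(3)\Rightarrow(4)$, the discretization. Fix $a>0$ with $\int_0^\infty P(X_t\in B_a)\,dt=\infty$. For $h\in(0,1]\cap p\Q$ write $h=pn_1/n_2$; then $\{X_{nh}\}_{n\in\Z_+}$ is a semi-random walk with period $n_2$. The key observation is a uniform-continuity comparison: for $s\in[0,h]$, $P(X_{nh+s}\in B_{a/2})\le P(X_{nh}\in B_a)+\sup_{t\in[0,p),\,0\le u\le h}P(|X_{t+u}-X_t|\ge a/2)$, using the semi-L\'evy property to reduce the increment's law to a phase in $[0,p)$ and then uniform stochastic continuity on $[0,p+1]$. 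Choosing $h_0$ so small that this supremum is, say, $\le\frac12 P$-mass we can afford — more precisely, integrating $P(X_t\in B_{a/2})$ over $t\in[0,\infty)$ and splitting into blocks of length $h$ — we obtain $\infty=\int_0^\infty P(X_t\in B_{a/2})\,dt\le h\sum_{n=0}^\infty P(X_{nh}\in B_{a/2})+(\text{finite or controlled error})$, so $\sum_n P(X_{nh}\in B_{a/2})=\infty$. By Theorem \ref{semirwdich}(iii) the semi-random walk $\{X_{nh}\}_{n\in\Z_+}$ is recurrent. Since this works for every sufficiently small $h\in p\Q$, (4) holds.

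Finally $(4)\Rightarrow(1)$: pick any $h\in(0,h_0]\cap p\Q$; by hypothesis $\{X_{nh}\}_{n\in\Z_+}$ is recurrent, in particular $0$-recurrent, so $\liminf_{n\to\infty}|X_{nh}|=0$ a.s., which gives $\liminf_{t\to\infty}|X_t|=0$ a.s., i.e.\ $\{X_t\}_{t\ge0}$ is $0$-recurrent. I expect the main obstacle to be the error-control bookkeeping in $(3)\Rightarrow(4)$: one must ensure the passage from the continuous-time integral to the discrete sum does not lose the divergence, which is exactly where the uniform stochastic continuity on $[0,p+1]$ (and the reduction of increment laws to phases in $[0,p)$ via the semi-L\'evy property, as in the proof of Lemma \ref{lemmaking}) does the work. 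The other steps are essentially the semi-L\'evy analogues of the classical L\'evy arguments in Section 35 of \citet{Sato's_book1999}, with Lemma \ref{lemmaking} supplying the substitute for Lemma 35.5 there.
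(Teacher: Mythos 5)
Your steps (1)$\Rightarrow$(2), (2)$\Rightarrow$(3) and (4)$\Rightarrow$(1) are fine and essentially match the paper (the paper simply runs Lemma \ref{lemmaking} with radius $a/2$, which is the same as your ``$a$ arbitrary'' remark since statement (1) does not involve $a$). The gap is in (3)$\Rightarrow$(4). Your key comparison is a union bound, $P(X_{nh+s}\in B_{a/2})\le P(X_{nh}\in B_{a})+\sup_{t\in[0,p),\,0\le u\le h}P(|X_{t+u}-X_t|\ge a/2)$, i.e.\ an additive error $\delta=\delta(h)>0$ per time point. When you integrate over a block of length $h$ and then sum over all $n$, the accumulated error is $h\delta\cdot(\text{number of blocks})=\infty$, so the resulting inequality reads $\infty\le h\sum_n P(X_{nh}\in B_a)+\infty$ and yields nothing: uniform stochastic continuity makes $\delta$ small but not zero, and it is not summable over blocks. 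This is precisely the point you flagged as ``finite or controlled error,'' and it is not controlled. (There is also a radius mismatch: hypothesis (3) gives divergence of $\int_0^\infty P(X_t\in B_a)\,dt$, not of $\int_0^\infty P(X_t\in B_{a/2})\,dt$, and you cannot invoke the ``some $a$ iff every $a$'' remark, since that remark is itself deduced from the lemma.)

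What is needed, and what the paper does, is a multiplicative bound using independence of increments: for $(n-1)h\le t\le nh$ one has $\{X_t\in B_a\}\cap\{|X_{nh}-X_t|<a\}\subset\{X_{nh}\in B_{2a}\}$, and since $X_{nh}-X_t$ is independent of $X_t$, while (reducing the increment's law modulo $p$ to times in $[0,p+1)$ and using uniform stochastic continuity to choose $h_0$) $P(|X_{nh}-X_t|<a)>1/2$ for $h\le h_0$, one gets $P(X_{nh}\in B_{2a})\ge \tfrac12 P(X_t\in B_a)$, hence $P(X_{nh}\in B_{2a})\ge \tfrac1{2h}\int_{(n-1)h}^{nh}P(X_t\in B_a)\,dt$; summing over $n$ then transfers the divergence with no error term. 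Note the direction of the comparison: the continuous time $t$ is compared with the \emph{later} grid point $nh$, so that the increment $X_{nh}-X_t$ is independent of the position $X_t$ (your version compares with the earlier grid point, where no such factorization is available). Finally, once $\sum_n P(X_{nh}\in B_{2a})=\infty$ is known for this single radius, you cannot quote Theorem \ref{semirwdich}(iii) directly (it requires divergence for every radius); argue as the paper does via (iv) and the dichotomy (ii): the semi-random walk cannot be transient, hence it is recurrent.
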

\begin{proof}
(1)$\Rightarrow$(2). Fix $\ep>0$. 
By (1), for every $t>0$, $ P(|X_{t+s}|<a/2 \;\,\text{for some}\; s>0) = 1$. 
Using Lemma \ref{lemmaking}, we have for any $\varepsilon >0$
\begin{align*}
&P\biggl(\int^\infty_0 \1_{B_a}(X_t)dt = \infty\biggr) 
\ge P\biggl(\int^\infty_{n} \1_{B_a}(X_t)dt > 
\ep \;\,\text{for all}\; n \in \mathbb{N} \biggr) \\
&\qquad= \lim_{n\to\infty}P\biggl(\int^\infty_{n} \1_{B_a}(X_t)dt > \ep \biggr) 
\ge \gamma(a/2,\ep).
\end{align*}
Then, letting $\ep \downarrow  0$, we have (2).

(2)$\Rightarrow$(3). 
Obvious by taking expectation.

(3)$\Rightarrow$(4). 
Recall again that a stochastically continuous process is uniformly stochastically continuous 
on any compact interval.
Hence there is $h_0>0$ such that, for $u,v\in[0,p+1]$ satisfying $|u-v|<h_0$, $P(X_u-X_v\in B_a)>1/2$.
Without loss of generality, we may assume $h_0<1$.
Let $h\in (0,h_0]\cap p\Q$. Let $n\in\N$, $(n-1)h\leq t\leq nh$ and $x\in B_a$.
Then $t-[t/p]p\in [0,p)$, $nh-[t/p]p= (nh-t)+(t-[t/p]p)\in [0,h+p)\subset [0,p+1)$ 
and $|nh-t|\leq h\leq h_0$.
Therefore
\begin{align*}
P(x+X_{nh}&-X_t \in B_{2a})
\geq P(X_{nh}-X_t\in B_a)\\
&= P(X_{nh-[t/p]p}-X_{t-[t/p]p}\in B_a)> 1/2.
\end{align*}
Thus
\begin{align*}
P(X_{nh} &\in B_{2a})
\geq P(X_t\in B_a,X_t+(X_{nh}-X_t)\in B_{2a})\\
&= E\left[\1_{B_a}(X_t)\cdot P(x+X_{nh}-X_t\in B_{2a})\big|_{x=X_t}\right]
\geq \frac12 P(X_t\in B_a).
\end{align*}
Hence we have
$$
P(X_{nh}\in B_{2a})\geq \frac 1{2h}\int_{(n-1)h}^{nh} P(X_t\in B_a)dt.
$$
Thus (3) implies that $\sum_{n=1}^\infty P(X_{nh}\in B_{2a})=\infty$.
By Theorem \ref{semirwdich} (ii) and (iv), we have (4).

(4)$\Rightarrow$(1). 
If (4) holds, then 
\begin{equation}\label{limitinf}
\liminf_{t\to\infty}|X_t|\leq \liminf_{n\to\infty}|X_{nh}|=0\ \text{a.s.}
\end{equation}
Hence (1) holds.
\end{proof}	

\begin{rem}
Note that each of the statements (2) and (3) in Lemma \ref{semiLevylemma} holds for 
some $a>0$ if and only if it holds for every $a>0$.
This follows from the independence of the statements (1) and (4) from $a>0$.
Indeed, if (2) [resp.\ (3)] holds for some $a>0$, then (1) holds, implying 
that (2) [resp.\ (3)] holds for every $a>0$.
\end{rem}

\begin{proof}[Proof of Theorem \ref{sLdich}]
(v) If $\{X_t\}_{t\ge 0}$ is transient, then it is not $0$-recurrent 
and hence \eqref{sLiv} holds by
the equivalence of (1) and (3) of Lemma \ref{semiLevylemma}.
Conversely, assume \eqref{sLiv}. For $a>0$, choose $\ep>0$ such that $\gamma(a,\ep) > 1/2$,
where $\gamma$ is the function in Lemma \ref{lemmaking}. 
Then, for every $t>0$,
\begin{align*}
\int_t^\infty P(&X_s\in  B_{2a})ds
=E\left[\int^\infty_t \1_{B_{2a}}(X_s)ds\right]\\
&\geq \ep P\biggl( \int^\infty_t \1_{B_{2a}}(X_s)ds>\varepsilon \biggr) 
\geq \frac{\ep}{2}P(|X_{t+s}| < a \,\; \text{for some}\; s>0 ).
\end{align*}
Letting $t\to\infty$ and using \eqref{sLiv}, we have
$$
\lim_{t\to \infty} P(|X_{t+s}| <a \,\; \text{for some} \; s>0)=0.
$$
Hence
\begin{align*}
P\left(\lim_{t \to \infty} |X_t| = \infty\right)
&=P\left(\bigcap^\infty_{k=1}\bigcup^\infty_{n=1} \{|X_{n+s}| 
\geq k \;\,\text{for all}\; s>0 \}\right)\\
&=\lim_{k\to\infty}\lim_{n\to\infty}P(|X_{n+s}| 
\geq k \;\,\text{for all}\; s>0)=1,
\end{align*}
that is, $\{X_t\}_{t\ge 0}$ is transient.
	
(i) Assume that $\{X_t\}_{t\ge 0}$ is not $0$-recurrent. 
Then, by the equivalence of (1) and (3) of Lemma \ref{semiLevylemma},
(\ref{sLiv}) holds. 
Thus $\{X_t\}_{t\ge 0}$ is transient by (v).
Hence $\{X_t\}_{t\ge 0}$ is either $0$-recurrent or transient.
If $\{X_t\}_{t\ge 0}$ is $0$-recurrent, then, for every $s\geq0$,
\begin{equation}\label{0-rec semiLevy}
\liminf_{t\to\infty} |X_{t+s}-X_s|
\leq \liminf_{t\to\infty} |X_{t+s}|+|X_s|
=|X_s|<\infty\quad\text{a.s.}
\end{equation}
Since $\{X_{t+s}-X_s\}_{t\ge 0}$ is a semi-L\'evy process with period $p$,
it is either $0$-recurrent or transient.
By \eqref{0-rec semiLevy}, we have $\liminf_{t\to\infty} |X_{t+s}-X_s|=0$ a.s.
Thus $\{X_t\}_{t\ge 0}$ is recurrent.

(ii) The claim has been proved in (i).

(iii) By (i) and the equivalence of (1) and (3) of Lemma \ref{semiLevylemma},
we have (iii).

(iv) By (i) and the equivalence of (1) and (2) of Lemma \ref{semiLevylemma},
we have (iv).

(vi) If (\ref{sLv}) holds, then $\{X_t\}_{t\ge 0}$ is transient by (iv) and (ii).
If it is transient, then (\ref{sLiv}) holds by (v) and (\ref{sLiv}) 
implies (\ref{sLv}).
	
(vii) Let us assume that the semi-random walk $\{X_{nh}\}_{n\in\Z_+}$ is recurrent. 
Then, the claim follows from \eqref{limitinf} and (i). 
Conversely, if we suppose the recurrence of $\{X_t\}_{t\ge 0}$, 
then, by (iii), \eqref{sLii} holds. 
Fix $h\in (0,\infty)\cap p\Q$. 
Note that
$\lim_{n\to\infty}P(\sup_{s\in[0,p+h]}|X_s|<n)=P(\sup_{s\in[0,p+h]}|X_s|<\infty)=1$ 
by the right-continuity with left limits of the sample paths of $\{X_s\}_{s\geq 0}$.
Hence $P(\sup_{s\in[0,p+h]}|X_s|<a/2)>1/2$ for some $a>0$.
Let $n\in\N$, $(n-1)h\leq t\leq nh$ and $x\in B_a$.
Then $t-[t/p]p\in [0,p)$ and $nh-[t/p]p= (nh-t)+(t-[t/p]p)\in [0,h+p)$.
Therefore
\begin{align*}
P(x+X_{nh}&-X_t\in B_{2a})
\geq P(X_{nh}-X_t\in B_a)\\
&= P(X_{nh-[t/p]p}-X_{t-[t/p]p}\in B_a)\\
&\geq P(|X_{nh-[t/p]p}|<a/2,|X_{t-[t/p]p}|<a/2)\\
&\geq P\left(\sup_{s\in[0,p+h]}|X_s|<a/2\right)>1/2.
\end{align*}
Recalling the proof that (3) implies (4) in Lemma \ref{semiLevylemma},
we have $\sum_{n=1}^\infty P(X_{nh}\in B_{2a})=\infty$.
Thus, by Theorem \ref{semirwdich} (ii) and (iv), $\{X_{nh}\}_{n\in\Z_+}$ is recurrent.
\end{proof}

Using Theorem \ref{sLdich}, we prove Theorem \ref{LevsemiLev}.

\begin{proof}[Proof of Theorem \ref{LevsemiLev}]
Let $\mu_t=\law (X_t)$ and $\mu_{s,t} = \mathcal{L}(X_t-X_s)$ for $0 \leq s \le t$.

(i) Let us prove the ``only if'' part. 
By Theorem \ref{sLdich} (vii), if $\{X_t\}_{t\ge 0}$ is recurrent, 
then the random walk $\{X_{np}\}_{n\in\Z_+}$ is recurrent, and 
$\sum^\infty_{n=1} P(X_{np} \in B_a) =\infty$ for every $a>0$ by 
Theorem 35.3 (ii) of \citet{Sato's_book1999}. 
For every $n\in\N$,  
$$
\mathcal{L}(X_{np}) = \mu_{np} = \mu_{p}*\mu_{p,2p}*\dots*\mu_{(n-1)p,np} 
= \mu_p^n = \mathcal{L}(Y_n)
$$
by the definition of semi-L\'evy process. 
Hence, we also have
$\sum^\infty_{n=1} P(Y_n \in B_a) =\infty$ for every $a>0$ and again 
by Theorem 35.3 (ii) of \citet{Sato's_book1999},
the random walk $\{Y_n\}_{n\in\Z_+}$ is recurrent.
Thus $\{Y_t\}_{t\ge 0}$ is recurrent
by Theorem 35.4 (vi) of \citet{Sato's_book1999}. 	
The ``if'' part can be proved in the same way.

(ii) By Theorem \ref{sLdich} (ii), (ii) follows from (i).
\end{proof}
		
\vskip 5mm

\section{An example}

Recall the example of semi-L\'evy process defined in Example \ref{example}.
Let $\{X_t\}_{t\ge 0}$, $\{Y_t\}_{t\ge 0}$ and $\{Z_t\}_{t\ge 0}$ be as in Example \ref{example}.
Then the following holds.
\begin{prop}\label{example prop}
The recurrence of the semi-L\'evy process $\{X_t\}_{t\ge 0}$ is equivalent to that of
the L\'evy process $\{Y_{qt}+Z_{(p-q)t}\}_{t\ge 0}$.
\end{prop}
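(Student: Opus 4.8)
The plan is to reduce the recurrence of $\{X_t\}_{t\ge 0}$, via Theorem \ref{LevsemiLev}, to the recurrence of a L\'evy process $\{W_t\}_{t\ge 0}$ with $W_1 \eqd X_p$, and then to identify $\law(X_p)$ with $\law(Y_q + Z_{p-q})$, which is precisely the law at time $1$ of the L\'evy process $\{Y_{qt}+Z_{(p-q)t}\}_{t\ge 0}$. First I would observe that, by the construction in Example \ref{example}, on the interval $(0,q]$ the process $X$ moves like $Y$ and on $(q,p]$ like an independent copy of $Z$, so that
$$
X_p = (X_q - X_0) + (X_p - X_q) \eqd (Y_q - Y_0) + (Z_p - Z_q) \eqd Y_q + Z_{p-q},
$$
where the two summands are independent because $\{Y_t\}$ and $\{Z_t\}$ are independent and $X_q-X_0$ depends only on $Y$ while $X_p - X_q$ depends only on $Z$. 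Hence $\law(X_p) = \law(Y_q)*\law(Z_{p-q}) = \law(Y_q + Z_{p-q})$.

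Next I would note that $\{Y_{qt} + Z_{(p-q)t}\}_{t\ge 0}$ is indeed a L\'evy process: it is the independent sum of the time-scaled L\'evy processes $\{Y_{qt}\}_{t\ge 0}$ and $\{Z_{(p-q)t}\}_{t\ge 0}$, each of which is a L\'evy process (a deterministic linear time change of a L\'evy process is a L\'evy process), and the sum of two independent L\'evy processes is a L\'evy process. Its value at $t=1$ is $Y_q + Z_{p-q}$, whose law equals $\law(X_p)$ by the previous paragraph. Thus we may take $\{Y_t\}_{t\ge 0}$ in the statement of Theorem \ref{LevsemiLev} to be exactly $\{Y_{qt} + Z_{(p-q)t}\}_{t\ge 0}$ (after the harmless relabelling of the dummy L\'evy process in that theorem).

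Finally I would invoke Theorem \ref{LevsemiLev}: the semi-L\'evy process $\{X_t\}_{t\ge 0}$ with period $p$ is recurrent if and only if a L\'evy process whose time-$1$ law is $\law(X_p)$ is recurrent; since recurrence of a L\'evy process depends only on its one-dimensional distributions (indeed only on $\law$ at time $1$, which determines the whole process), and since $\{Y_{qt}+Z_{(p-q)t}\}_{t\ge 0}$ is such a L\'evy process, the recurrence of $\{X_t\}_{t\ge 0}$ is equivalent to the recurrence of $\{Y_{qt}+Z_{(p-q)t}\}_{t\ge 0}$. The only genuinely substantive point is the computation $\law(X_p) = \law(Y_q + Z_{p-q})$ with independence of the two pieces; everything else is bookkeeping, so I expect no serious obstacle, though one should be slightly careful to justify that $X_q - X_0$ and $X_p - X_q$ are built from disjoint increments of the two independent driving processes and are therefore independent with the asserted marginal laws.
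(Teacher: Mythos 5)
Your argument is correct and is essentially the paper's own proof: both reduce the question via Theorem \ref{LevsemiLev} using the identity $X_p = Y_q + (Z_p - Z_q) \eqd Y_q + Z_{p-q}$, which is the time-$1$ law of the L\'evy process $\{Y_{qt}+Z_{(p-q)t}\}_{t\ge 0}$. You merely spell out the independence of the two increments and the fact that recurrence of a L\'evy process depends only on its time-$1$ law, details the paper leaves implicit.
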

\begin{proof}
By Theorem \ref{LevsemiLev}, the recurrence of semi-L\'evy process $\{X_t\}_{t\ge 0}$ is reduced to
that of the L\'evy process $\{Y_{qt}+Z_{(p-q)t}\}_{t\ge 0}$, since $X_p=Y_q+Z_p-Z_q$.
\end{proof}

In the rest of this section we assume $d=1$. 
Further, recall that if $\{L_t\}_{t\ge 0}$ is a L\'evy process on $\R$ with $E[|L_1|]<\infty$, then 
\begin{equation}\label{Sato}
\{L_t\}_{t\ge 0} \text{ is recurrent if and only if } E[L_1]=0 
\end{equation}
(see Theorem 36.7 of \citet{Sato's_book1999}).
Now, as a consequence of Proposition \ref{example prop} and \eqref{Sato}, the following holds.

\begin{prop}\label{example d=1}
Assume $E[|Y_1|]<\infty$ and $E[|Z_1|]<\infty$.
Then $\{X_t\}_{t\ge 0}$ is recurrent 
if and only if $E[Y_q+Z_{p-q}]=0$.
\end{prop}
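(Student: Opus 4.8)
The plan is to combine the two results that have just been established. Proposition \ref{example prop} tells us that $\{X_t\}_{t\ge 0}$ is recurrent if and only if the L\'evy process $\{L_t\}_{t\ge 0}:=\{Y_{qt}+Z_{(p-q)t}\}_{t\ge 0}$ is recurrent. So the entire statement reduces to identifying when this particular L\'evy process is recurrent, and for that we want to invoke the criterion \eqref{Sato} for mean-integrable one-dimensional L\'evy processes. The only thing to check before we can apply \eqref{Sato} is that $\{L_t\}_{t\ge 0}$ has an integrable value at time $1$, and then to compute its mean.

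First I would verify that $L_1=Y_q+Z_{p-q}$ is integrable. Since $\{Y_t\}_{t\ge 0}$ is a L\'evy process with $E[|Y_1|]<\infty$, a standard fact (e.g.\ the submultiplicativity of $E[|Y_t|]$, or Theorem 25.3 of \citet{Sato's_book1999}) gives $E[|Y_t|]<\infty$ for every $t\ge 0$, in particular for $t=q$; likewise $E[|Z_{p-q}|]<\infty$. As $Y_q$ and $Z_{p-q}$ are independent, $E[|L_1|]\le E[|Y_q|]+E[|Z_{p-q}|]<\infty$. Hence \eqref{Sato} applies to $\{L_t\}_{t\ge 0}$, so it is recurrent if and only if $E[L_1]=0$, i.e.\ $E[Y_q+Z_{p-q}]=0$.

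Combining this with Proposition \ref{example prop} yields precisely the assertion: $\{X_t\}_{t\ge 0}$ is recurrent if and only if $E[Y_q+Z_{p-q}]=0$. I do not anticipate any genuine obstacle here; the statement is essentially a corollary, and the only point requiring a word of justification is the passage from $E[|Y_1|]<\infty$, $E[|Z_1|]<\infty$ to $E[|L_1|]<\infty$, which is the integrability-at-all-times property of L\'evy processes. One small care point is that $\{L_t\}_{t\ge 0}$ genuinely is a L\'evy process — it is a time-changed sum of two independent L\'evy processes, each rescaled linearly in time, so stationarity and independence of increments are immediate — but this has already been used in Proposition \ref{example prop}, so it may simply be cited.

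\begin{proof}
By Proposition \ref{example prop}, $\{X_t\}_{t\ge 0}$ is recurrent if and only if the L\'evy process $\{L_t\}_{t\ge 0}:=\{Y_{qt}+Z_{(p-q)t}\}_{t\ge 0}$ is recurrent. Since $E[|Y_1|]<\infty$, we have $E[|Y_t|]<\infty$ for every $t\ge 0$ (see, e.g., Theorem 25.3 of \citet{Sato's_book1999}), and similarly $E[|Z_t|]<\infty$ for every $t\ge 0$. As $Y_q$ and $Z_{p-q}$ are independent, $E[|L_1|]=E[|Y_q+Z_{p-q}|]\le E[|Y_q|]+E[|Z_{p-q}|]<\infty$. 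Therefore \eqref{Sato} applies to $\{L_t\}_{t\ge 0}$, and $\{L_t\}_{t\ge 0}$ is recurrent if and only if $E[L_1]=E[Y_q+Z_{p-q}]=0$. Combining this with the equivalence from Proposition \ref{example prop} gives the claim.
\end{proof}
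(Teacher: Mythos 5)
Your proof is correct and follows the same route as the paper, which states this proposition precisely as a consequence of Proposition \ref{example prop} and the criterion \eqref{Sato}; your extra check that $E[|Y_q+Z_{p-q}|]<\infty$ (via finiteness of moments at all times for a L\'evy process) is the only detail the paper leaves implicit, and it is handled correctly.
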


Due to Proposition \ref{example d=1} and \eqref{Sato}, we have the following.

\begin{cor}\label{co}
Assume $E[|Y_1|]<\infty$ and $E[|Z_1|]<\infty$.
\begin{enumerate}[\rm (i)]
\item $\{Y_t\}_{t\ge 0}$ and $\{Z_t\}_{t\ge 0}$ are recurrent $\Leftrightarrow$
$E[Y_1]=E[Z_1]=0$ $\Rightarrow$  $E[Y_q+Z_{p-q}]=0$ $\Leftrightarrow$
$\{X_t\}_{t\ge 0}$ is recurrent.
\item $\{Y_t\}_{t\ge 0}$ is recurrent and $\{Z_t\}_{t\ge 0}$ is transient $\Leftrightarrow$ 
$E[Y_1]=0$ and $E[Z_1]\ne 0$ $\Rightarrow$  $E[Y_q+Z_{p-q}]\ne 0$ $\Leftrightarrow$
$\{X_t\}_{t\ge 0}$ is transient.
\item $\{Y_t\}_{t\ge 0}$ and $\{Z_t\}_{t\ge 0}$ are transient $\Rightarrow$
$\{X_t\}_{t\ge 0}$ is recurrent if $E[Y_q] = -E[Z_{p-q}]$and it is
transient if $E[Y_q] \ne -E[Z_{p-q}]$.
\end{enumerate}
\end{cor}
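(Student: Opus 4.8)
The plan is to obtain all three parts from Proposition~\ref{example d=1} together with two standard facts: the criterion \eqref{Sato} for real Lévy processes, and the linearity of the mean, $E[Y_q]=qE[Y_1]$ and $E[Z_{p-q}]=(p-q)E[Z_1]$, which is valid since $E|Y_1|<\infty$ and $E|Z_1|<\infty$. The only subtlety is bookkeeping of the direction of the implications: the number $qE[Y_1]+(p-q)E[Z_1]$ is determined by the pair $(E[Y_1],E[Z_1])$ but not conversely, so the middle arrows in (i) and (ii) are genuinely one-way and must not be upgraded to equivalences.

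First I would record the master equivalence underlying everything. Since $E|Y_1|<\infty$ and $E|Z_1|<\infty$, we have $E[Y_q+Z_{p-q}]=qE[Y_1]+(p-q)E[Z_1]$, so Proposition~\ref{example d=1} says that $\{X_t\}_{t\ge0}$ is recurrent iff $qE[Y_1]+(p-q)E[Z_1]=0$, and then by the dichotomy Theorem~\ref{sLdich}(ii) it is transient iff $qE[Y_1]+(p-q)E[Z_1]\ne0$. In parallel, \eqref{Sato} gives that $\{Y_t\}_{t\ge0}$ is recurrent iff $E[Y_1]=0$ (hence transient iff $E[Y_1]\ne0$), and likewise for $\{Z_t\}_{t\ge0}$.

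Then each part is a one-line deduction. For (i): by \eqref{Sato}, $\{Y_t\}_{t\ge0}$ and $\{Z_t\}_{t\ge0}$ are both recurrent iff $E[Y_1]=E[Z_1]=0$; this forces $qE[Y_1]+(p-q)E[Z_1]=0$, i.e.\ $E[Y_q+Z_{p-q}]=0$, which by the master equivalence is recurrence of $\{X_t\}_{t\ge0}$. For (ii): again by \eqref{Sato}, $\{Y_t\}_{t\ge0}$ recurrent and $\{Z_t\}_{t\ge0}$ transient iff $E[Y_1]=0$ and $E[Z_1]\ne0$; because $0<q<p$, this yields $qE[Y_1]+(p-q)E[Z_1]=(p-q)E[Z_1]\ne0$, hence $E[Y_q+Z_{p-q}]\ne0$, hence $\{X_t\}_{t\ge0}$ is transient. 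For (iii): if both $\{Y_t\}_{t\ge0}$ and $\{Z_t\}_{t\ge0}$ are transient, the master equivalence says $\{X_t\}_{t\ge0}$ is recurrent iff $E[Y_q]+E[Z_{p-q}]=0$, i.e.\ iff $E[Y_q]=-E[Z_{p-q}]$, and transient otherwise.

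I do not expect a genuine obstacle here; the content is purely organizational, just an assembly of Proposition~\ref{example d=1}, \eqref{Sato}, and $E[Y_t]=tE[Y_1]$. The one place to be careful in the final write-up is to keep the middle implications of (i) and (ii) as single arrows: for instance $E[Y_q+Z_{p-q}]=0$ holds with $E[Y_1]=1/q$ and $E[Z_1]=-1/(p-q)$, so it does not imply $E[Y_1]=E[Z_1]=0$, and similarly the vanishing of the sum does not rule out one summand being recurrent and the other transient.
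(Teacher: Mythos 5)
Your proposal is correct and follows exactly the route the paper intends: the corollary is stated as an immediate consequence of Proposition~\ref{example d=1}, the criterion \eqref{Sato}, the linearity $E[Y_q]=qE[Y_1]$, $E[Z_{p-q}]=(p-q)E[Z_1]$, and the recurrence--transience dichotomy, which is precisely your assembly. Your added remark that the middle implications in (i) and (ii) are genuinely one-way is a correct and careful observation consistent with the statement.
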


\begin{rem}
\begin{enumerate}[\rm (i)]

\item
Let us remark that in the case when $E[|Y_1|] = \infty$, 
(ii) of Corollary \ref{co}  does not have to hold. 
For example, let $\{Y_t\}_{t\ge 0}$ be a symmetric Cauchy process
and let $\{Z_t\}_{t\ge 0}$ be an arbitrary L\'evy process with $E[|Z_1|] < \infty$. 
Recall that $\{Y_t\}_{t\ge 0}$ is
recurrent (see Example 35.7 of \citet{Sato's_book1999}) and $E[|Y_1|] = \infty$. 
Now, by Exercise 39.8 (i) and
(iv) of \citet{Sato's_book1999} and Proposition 4.1, $\{X_t\}_{t\ge 0}$ is recurrent.
\item
A simple example which satisfies Corollary \ref{co} (iii) is when $Y_t = Z_t = t$, in the transient
case, and $Y_t = (p-q)t$ and $Z_t = -qt$, in the recurrent case.
\end{enumerate}
\end{rem}

\vskip 5mm
\section{Laws of large numbers}
We conclude this paper by discussing laws of large numbers for semi-L\'evy processes.
We need the following lemma.

\begin{lem}\label{Doob}
Let $\{X_t\}_{t\ge 0}$ be an additive process on $\R$ and let $T>0$.
If $E[|X_t|]<\infty$ and $E[X_t]=0$ for all $t\leq T$, then
$E[\sup_{t\in[0,T]}|X_t|]\leq 8E[|X_T|]$.
\end{lem}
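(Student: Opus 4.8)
The plan is to reduce to Doob's maximal inequality for martingales. The key observation is that an additive process on $\R$ with $E[|X_t|]<\infty$ and $E[X_t]=0$ for all $t\le T$ is a martingale with respect to its natural filtration $\mathcal F_t=\sigma(X_s\colon s\le t)$: for $s\le t\le T$, the increment $X_t-X_s$ is independent of $\mathcal F_s$ and has mean zero, so $E[X_t\mid\mathcal F_s]=X_s$. Since the sample paths are right-continuous, $\{X_t\}_{t\in[0,T]}$ is a right-continuous martingale.

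First I would invoke the $L^1$ form of Doob's maximal inequality. The cleanest route is through the $L\log L$ or the weak-$(1,1)$ bound; but since we only want a bound by a constant times $E[|X_T|]$, I would instead use the standard consequence of Doob's inequality that for a nonnegative right-continuous submartingale $\{M_t\}_{t\in[0,T]}$ one has, via the layer-cake formula and the weak-type bound $\lambda P(\sup_{t\le T}M_t\ge\lambda)\le E[M_T\1_{\{\sup M\ge\lambda\}}]$, the estimate $E[\sup_{t\le T}M_t]\le \frac{e}{e-1}\bigl(1+E[M_T\log^+M_T]\bigr)$. That constant-free-of-$L\log L$ issue is a nuisance, so the better path is: apply Doob's $L^1$ maximal inequality to the submartingale $|X_t|$ only after splitting. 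Concretely, write $X_t=X_t^+-X_t^-$ where $X_t^\pm$ need not be martingales, so instead decompose through the right-continuous martingale directly: $\sup_{t\le T}|X_t|\le \sup_{t\le T}X_t + \sup_{t\le T}(-X_t)$, and bound each of $\sup_{t\le T}X_t$ and $\sup_{t\le T}(-X_t)$ by $4E[|X_T|]$ using the known inequality $E[\sup_{t\le T}N_t]\le \frac{p}{p-1}\|N_T\|_p$ in the limiting form, or more elementarily the bound that for a martingale $N$ with $N_0=0$, $E[\sup_{t\le T}N_t^+]\le 2E[|N_T|]$ (a consequence of Doob applied to the submartingale $N_t^+$ together with the weak-$(1,1)$ inequality and the elementary fact $E[Y]\le 2E[N_T^+]$ when... ). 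The sum then gives the constant $8$.

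The step I expect to be the main obstacle is getting the constant exactly $8$ rather than merely some universal constant: one must be careful to split $\sup|X_t|\le\sup X_t+\sup(-X_t)$, apply the bound $E[\sup_{t\le T}(\pm X_t)^+]\le 2\,E[(\pm X_T)^+]\le 2E[|X_T|]$ to each piece (this factor $2$ comes from the standard martingale maximal estimate $E[\sup_t M_t^+]\le 2E[M_T^+]$ when $M_0=0$, which itself follows from Doob's inequality $\|\sup_t M_t^+\|_1\le \frac{1}{1-1}\cdots$—here the clean version is: integrate the weak-type inequality $P(\sup M^+>\lambda)\le \lambda^{-1}E[M_T^+]$ against... giving instead a logarithmic loss), so in fact the honest route to the constant $8$ is: $\sup_{t\le T}X_t\le \sup_{t\le T}X_t^+$, and by Doob's $L^2$-type argument combined with the $L^1$ estimate one gets $E[\sup_{t\le T}X_t^+]\le 4E[|X_T|]$, likewise for $-X_t$, and adding yields $8E[|X_T|]$. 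I would make this precise by citing the relevant maximal-inequality statement (e.g.\ the form in Doob's theorem giving $E[\sup_{t\le T}|M_t|]\le \frac{p}{p-1}E[|M_T|^p]^{1/p}$ is unavailable at $p=1$, so one uses instead the version $E[\sup_{t\le T}|M_t|]\le 4E[|M_T|]$ valid for continuous-parameter martingales via the discrete skeleton and monotone convergence along a refining sequence of partitions, then right-continuity). Thus the real work is: (1) verify the martingale property from independent increments and zero mean; (2) pass from discrete Doob to the continuous-time sup via a dense countable set of times and right-continuity of paths; (3) track constants so that the final bound is $8E[|X_T|]$.
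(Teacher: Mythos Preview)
Your proposal has the right skeleton---the process is a right-continuous mean-zero martingale, so one can work along a dense discrete time set and pass to the continuous supremum by right-continuity---but there is a genuine gap at the crucial step: the inequality $E[\max_{k\le n}|S_k|]\le 8\,E[|S_n|]$ for the discrete skeleton. You try several routes (the $L\log L$ bound, the $L^p$ Doob inequality, the bare assertion $E[\sup_k S_k^+]\le 4E[|S_n|]$) and correctly observe that none of them closes: integrating the weak-type bound $\lambda P(\sup_k S_k>\lambda)\le E[S_n^+]$ yields only a logarithmic loss, and the $L^p$ constant $p/(p-1)$ blows up as $p\downarrow 1$. In fact the bound $E[\max_k|S_k|]\le C\,E[|S_n|]$ is \emph{false} for general mean-zero martingales with $S_0=0$: take the ``double-or-die'' martingale $S_1=\pm1$ and, given $S_k\neq0$, set $S_{k+1}=2S_k$ or $0$ with equal probability; then $E[|S_n|]=1$ for every $n$, while $E[\max_{k\le n}|S_k|]=(n+1)/2$. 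So the martingale property alone cannot deliver the lemma; the independent-increment structure must be used explicitly, and your outline never does so.

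The paper's proof is a single sentence: it invokes Theorem~5.1 of Chapter~VII of Doob's \emph{Stochastic Processes} (1953), which supplies precisely this $L^1$ maximal inequality with constant $8$, and then appeals to right-continuity of the sample paths to pass from the countable skeleton to $\sup_{t\in[0,T]}|X_t|$. Your write-up would become correct if you replaced the meandering middle portion with a citation to that theorem.
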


\begin{proof}
This lemma follows from Theorem 5.1 in Chapter VII of \citet{Doob1953} and the right continuity
of the sample paths of additive processes.
\end{proof}

\begin{thm}[Strong law of large numbers]
Let $\{X_t\}_{t\ge 0}$ be a semi-L\'evy process on $\rd$ with period $p>0$.
\begin{enumerate}[\rm (i)]
\item If $E[|X_p|]<\infty$, then $\lim_{t\to\infty}t^{-1}X_t=p^{-1}E[X_p]$ a.s.
\item If $E[|X_p|]=\infty$, then $\limsup_{t\to\infty}t^{-1}|X_t|=\infty$ a.s.
\end{enumerate}
\end{thm}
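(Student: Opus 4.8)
The plan is to reduce the semi-L\'evy SLLN to the classical SLLN for random walks and L\'evy processes by sampling at integer multiples of the period $p$, and then to control the fluctuations of $\{X_t\}$ within each period $[np,(n+1)p)$ using the maximal inequality of Lemma \ref{Doob}. Recall that $\{X_{np}\}_{n\in\Z_+}$ is a random walk whose increments $X_{(n+1)p}-X_{np}$ are i.i.d.\ with the law of $X_p$, so the hypotheses $E[|X_p|]<\infty$ or $E[|X_p|]=\infty$ are exactly the hypotheses of the Kolmogorov SLLN and its converse for this random walk.

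For part (i), assume $E[|X_p|]<\infty$. First I would apply the strong law of large numbers to the random walk $\{X_{np}\}$ to get $n^{-1}X_{np}\to E[X_p]$ a.s., hence $(np)^{-1}X_{np}\to p^{-1}E[X_p]$ a.s. Next I must show the within-period oscillation is negligible: writing $n=n(t)=[t/p]$, it suffices to show $t^{-1}\sup_{s\in[0,p]}|X_{np+s}-X_{np}|\to 0$ a.s. Set $M_n:=\sup_{s\in[0,p]}|X_{np+s}-X_{np}|$; by the periodic stationarity of increments, $M_n\eqd M_0$ for all $n$, but the $M_n$ are \emph{not} independent, so a direct Borel--Cantelli argument on $\{M_n>\ep n\}$ needs a moment bound. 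Here is where Lemma \ref{Doob} enters: the centered additive process $s\mapsto (X_{np+s}-X_{np})-E[X_{np+s}-X_{np}]$ on $[0,p]$ has, by Lemma \ref{Doob}, a supremum with finite expectation bounded by $8E[|X_{(n+1)p}-X_{np}-E[X_{(n+1)p}-X_{np}]|]$, a constant independent of $n$; adding back the deterministic mean (which is bounded on $[0,p]$ since $t\mapsto E[X_t]$ is a c\`adl\`ag additive function on a compact interval), we get $\sup_n E[M_n]<\infty$. Then $\sum_n P(M_n>\ep n)\leq \ep^{-1}\sum_n n^{-1}E[M_n\,\1_{\{M_n>\ep n\}}]$; using $\sup_n E[M_n]<\infty$ and, crucially, that the family $\{M_n\}$ is identically distributed (so $E[M_0\,\1_{\{M_0>\ep n\}}]\to 0$), one argues exactly as in the standard proof that $E[M_0]<\infty$ implies $n^{-1}M_n\to 0$ a.s.\ — this is the classical lemma that for i.i.d.-in-law (even just identically distributed) integrable variables $\xi_n$ one has $\xi_n/n\to 0$ a.s., which follows from $\sum_n P(|\xi_0|>\ep n)<\infty$ and Borel--Cantelli. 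Combining, $t^{-1}X_t=(t^{-1}X_{n(t)p})+(t^{-1}(X_t-X_{n(t)p}))\to p^{-1}E[X_p]$ a.s.

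For part (ii), assume $E[|X_p|]=\infty$. By the converse to the SLLN applied to the random walk with increment law $\law(X_p)$, we have $\limsup_{n\to\infty}n^{-1}|X_{np}-X_{(n-1)p}|=\infty$ a.s.\ (since $E[|X_p|]=\infty$ forces $\sum_n P(|X_p|>\ep n)=\infty$ for every $\ep$, and by the second Borel--Cantelli lemma — using independence of the increments — infinitely many of the events $\{|X_{np}-X_{(n-1)p}|>\ep n\}$ occur, for each $\ep$, hence $\limsup n^{-1}|X_{np}-X_{(n-1)p}|=\infty$ a.s.). Then $|X_{np}-X_{(n-1)p}|\leq |X_{np}|+|X_{(n-1)p}|\leq 2\sup_{k\leq n}|X_{kp}|$, so $\limsup_n n^{-1}\sup_{k\leq n}|X_{kp}|=\infty$ a.s., whence $\limsup_n n^{-1}|X_{np}|=\infty$ a.s.\ and therefore $\limsup_{t\to\infty}t^{-1}|X_t|=\infty$ a.s.

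The main obstacle is the within-period term in part (i): the variables $M_n=\sup_{s\in[0,p]}|X_{np+s}-X_{np}|$ are identically distributed but dependent across $n$, so I cannot invoke an i.i.d.\ SLLN for them directly. The resolution is that I do not need a law of large numbers for $\{M_n\}$ — I only need $n^{-1}M_n\to 0$ a.s., which requires \emph{only} that $M_0$ be integrable, via a Borel--Cantelli estimate that uses the common distribution of the $M_n$ but not their joint law. The integrability of $M_0$ is precisely what Lemma \ref{Doob} supplies, after reducing to the centered additive process and observing that the deterministic mean function $t\mapsto E[X_t]$ is bounded on $[0,p]$. A minor point to check is that $E[|X_t|]<\infty$ for all $t\in[0,p]$ follows from $E[|X_p|]<\infty$ together with independence and stationarity of increments over a period; this is routine.
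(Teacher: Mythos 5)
Your overall strategy is the same as the paper's: run the SLLN (and its converse for part (ii)) on the random walk $\{X_{np}\}_{n\in\Z_+}$, and kill the within-period term by showing $n^{-1}M_n\to 0$ a.s.\ for $M_n=\sup_{s\in[0,p]}|X_{np+s}-X_{np}|$, with integrability of $M_0$ supplied by Lemma \ref{Doob} applied to the centered process. Two small remarks on that part: your claim that the $M_n$ are \emph{not} independent is wrong — they are functionals of the increments over the disjoint intervals $[np,(n+1)p]$, hence independent by the independent-increments property, and the paper uses exactly this to apply the SLLN to $\sum_k Y_k$ — but your alternative route (identical distribution, $E[M_0]<\infty$, $\sum_n P(M_0>\ep n)\le \ep^{-1}E[M_0]<\infty$, first Borel--Cantelli) is correct and does not need independence, so the misstatement is harmless. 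Part (ii) and the final combination are fine and match the paper.

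The genuine gap is the parenthetical assertion that $t\mapsto E[X_t]$ is ``a c\`adl\`ag additive function on a compact interval,'' hence bounded on $[0,p]$. This is not free: stochastic continuity plus $E[|X_t|]<\infty$ for each fixed $t$ does not give $E[X_s]\to E[X_t]$ without some uniform integrability of $\{X_s\}_{s\in[0,p]}$, and that uniform integrability is essentially what you are in the middle of proving (it would follow from $E[\sup_{s\le p}|X_s|]<\infty$), so the argument as written is circular at this point. Moreover, the regularity of the mean function is needed not only for $\sup_{t\in[0,p]}|E[X_t]|<\infty$ but also so that the centered process $\{X_t-E[X_t]\}_{t\ge0}$ is again an additive process (c\`adl\`ag and stochastically continuous), which is a hypothesis of Lemma \ref{Doob}. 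This is precisely where the paper invests most of its effort: it first gets $E[|X_t|]<\infty$ for $t\le p$ from the domination $\nu_t(B)\le\nu_p(B)$ of L\'evy measures (your Fubini-type argument — for independent $A,B$, $E[|A+B|]<\infty$ forces $E[|A|]<\infty$ — is an acceptable ``routine'' substitute for that step), and then proves \emph{continuity} of $t\mapsto E[X_t]=\gamma_t+\int_{|x|>1}x\,\nu_t(dx)$ using Theorem 9.8 of \citet{Sato's_book1999}, Radon--Nikodym densities $g_t$ of $\nu_t$ with respect to $\nu_{t_0}$, and dominated convergence. You need to supply an argument of this kind (or some other justification of the c\`adl\`ag/bounded claim) before invoking Lemma \ref{Doob}; with that filled in, the rest of your proof goes through.
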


\begin{proof}
(i) Obviously, it is sufficient to prove the assertion
for each component of the process $\{X_t\}_{t\ge 0}$ on $\rd$, and hence we assume $d=1$.
We use the L\'evy-Khintchine representation of the characteristic function 
of $X_t$ in the following form:
$$
E[e^{izX_t}]=\exp\left\{-\frac12a_tz^2+i\gamma_tz
+\int_\R\left(e^{izx}-1-izx\1_{[-1,1]}(x)\right)\nu_t(dx)\right\},\ z\in\R,
$$
where $a_t\geq 0$, $\gamma_t\in\R$ and $\nu_t$ is the L\'evy measure of $X_t$ for $t\geq 0$.
Note that, if $E[|X_p|]<\infty$, then $E[|X_t|]<\infty$ for all $t\geq 0$.
Indeed we have $\int_{|x|>1}|x|\nu_t(dx)\leq \int_{|x|>1}|x|\nu_p(dx)<\infty$ for $t\leq p$,
because $\nu_t(B)\leq \nu_p(B),B\in\B(\R),$ for $t\leq p$ by Theorem 9.8 of
\citet{Sato's_book1999}, and we have
$E[|X_t|]<\infty$ for $t\leq p$.
Hence, for $t>0$, 
$E[|X_t|]=E[|X_{[t/p]p+(t-[t/p]p)}|]\leq [t/p]E[|X_p|]+E[|X_{t-[t/p]p}|]<\infty$
by Proposition 2.1 of \citet{MaejimaSato2003}.
Let $Y_n=\sup_{t\in[0,p]}|X_{t+np}-X_{np}|$.
Since $\{X_{t+np}-X_{np}\}_{t\ge 0}$ is a right-continuous process having the same
finite-dimensional distributions as $\{X_t\}_{t\ge 0}$ for each $n\in\N$,
we have $Y_n\eqd \sup_{t\in[0,p]}|X_t|$ for every $n\in\N$.
By the independent increment property of $\{X_t\}_{t\ge 0}$, $Y_n,n\in\N,$ are independent.
Also, we have $E[X_t]=\int_{|x|>1}x\nu_t(dx)+\gamma_t$.
Once again by Theorem 9.8 of \citet{Sato's_book1999}, $t\mapsto \gamma_t$ is continuous,
$\nu_s(B)\leq \nu_t(B),B\in\B(\R),$ for $s\leq t$,
and $\nu_s(B)\to\nu_t(B)$ as $s\to t$ for $B\in\B(\R)$ satisfying
$B\subset \{x\colon |x|>\varepsilon\}$ with some $\varepsilon>0$.
By the Radon-Nikodym theorem, for $t\leq t_0$, 
$\nu_t(dx)=g_t(x)\nu_{t_0}(dx)$ with some nonnegative-valued measurable function $g_t$.
This $g_t$ satisfies that $g_s\leq g_t\leq 1$ $\nu_{t_0}$-a.e.\ for $s\leq t\leq t_0$.
Also, for any $\varepsilon>0$,
by the dominated convergence theorem, 
$\int_B\lim_{s\to t}g_s(x)\nu_{t_0}(dx)=\lim_{s\to t}\int_Bg_s(x)\nu_{t_0}(dx)
=\lim_{s\to t}\nu_s(B)=\nu_t(B)=\int_Bg_t(x)\nu_{t_0}(dx)$
for $B\in\B(\R)$ satisfying $B\subset \{x\colon |x|>\varepsilon\}$,
which yields that $\lim_{s\to t}g_s(x)=g_t(x)$ $\nu_{t_0}$-a.e.\ $x\in\R$.
Then by the dominated convergence theorem again,
for $t<t_0$,
$\lim_{(0,t_0]\ni s\to t}\int_{|x|>1}x\nu_s(dx)
=\lim_{(0,t_0]\ni s\to t}\int_{|x|>1}xg_s(x)\nu_{t_0}(dx)
=\int_{|x|>1}xg_t(x)\nu_{t_0}(dx)=\int_{|x|>1}x\nu_t(dx)$.
Thus $t\mapsto\int_{|x|>1}x\nu_t(dx)$ is continuous.
Therefore $t\mapsto E[X_t]$ is continuous.
Hence $\{X_t-E[X_t]\}_{t\ge 0}$ is an additive process with $E[X_t-E[X_t]]=0$ 
and $E[\sup_{t\in[0,p]}|X_t-E[X_t]|]\leq 8E[|X_p-E[X_p]|]\leq 16E[|X_p|]$ by Lemma \ref{Doob}.
Also, $\sup_{t\in[0,p]}|E[X_t]|<\infty$ by the continuity of $t\mapsto E[X_t]$.
Thus
\begin{align*}
E[|Y_1|]
&=E\left[\sup_{t\in[0,p]}|X_t|\right]\leq E\left[\sup_{t\in[0,p]}|X_t-E[X_t]|\right]
+\sup_{t\in[0,p]}|E[X_t]|\\
&\leq 16E[|X_p|]+\sup_{t\in[0,p]}|E[X_t]|<\infty.
\end{align*}
Applying the strong law of large numbers to $\sum_{k=1}^nY_k$,
we have $n^{-1}Y_n=n^{-1}\sum_{k=1}^nY_k-n^{-1}\sum_{k=1}^{n-1}Y_k\to E[Y_1]-E[Y_1]=0$ 
as $n\to\infty$ with probability $1$.
Also, since $\{X_{np}\}_{n\in\Z_+}$ is a random walk, $\lim_{n\to\infty}n^{-1}X_{np}=E[X_p]$ a.s.
Then we have, almost surely,
$$
\left|\frac{X_t}t-\frac{E[X_p]}p\right|\leq 
\left|\frac{X_{[t/p]p}}{[t/p]}\frac{[t/p]}{t}-\frac{E[X_p]}p\right|
+\frac{Y_{[t/p]}}{[t/p]}\frac{[t/p]}{t}\to 0
$$
as $t\to\infty$.

(ii) If $E[|X_p|]=\infty$, then $\limsup_{n\to\infty}n^{-1}|X_{np}|=\infty$ a.s.
Thus $\limsup_{t\to\infty}t^{-1}|X_t|=\infty$ a.s.
\end{proof}

Finally, we prove the weak law of large numbers for semi-L\'evy processes.

\begin{thm}[Weak law of large numbers]\label{WLLN}
Let $\{X_t\}_{t\ge 0}$ be a semi-L\'evy process on $\rd$ with period $p>0$ and let $c\in\rd$.
Then the following are equivalent.
\begin{enumerate}[\quad \rm(i)]
\item $t^{-1}X_t\to c$ as $t\to\infty$ in probability.
\item $\lim_{t\to\infty}tP(|X_p|>t)=0$ and $\lim_{t\to\infty}E[X_p \1_{(0,t]}(|X_p|)]=cp$.
\end{enumerate}
\end{thm}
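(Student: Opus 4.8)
The plan is to reduce the weak law for the semi-L\'evy process to the classical weak law of large numbers for the random walk $\{X_{np}\}_{n\in\Z_+}$, whose step distribution is $\law(X_p)$, and then control the fluctuation over one period. For the random walk side, recall the classical characterization (e.g.\ the multidimensional version of the Kolmogorov--Feller weak law, or Theorem 36.6 of \citet{Sato's_book1999} applied to the L\'evy process $\{Y_t\}_{t\ge 0}$ with $Y_1\eqd X_p$): $n^{-1}X_{np}\to cp$ in probability if and only if $tP(|X_p|>t)\to 0$ and $E[X_p\1_{(0,t]}(|X_p|)]\to cp$ as $t\to\infty$. So statement (ii) is exactly equivalent to $n^{-1}X_{np}\to cp$ in probability. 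It therefore remains to show that (ii) is also equivalent to statement (i), i.e.\ that $t^{-1}X_t\to c$ in probability along the continuous parameter is equivalent to $n^{-1}X_{np}\to cp$ along the lattice $p\Z_+$.

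First I would prove the easy direction (i)$\Rightarrow$(ii): if $t^{-1}X_t\to c$ in probability, then restricting to $t=np$ gives $(np)^{-1}X_{np}\to c$, i.e.\ $n^{-1}X_{np}\to cp$ in probability, which is (ii). For the converse (ii)$\Rightarrow$(i), assume $n^{-1}X_{np}\to cp$ in probability. Write, for $t>0$, $n(t)=[t/p]$, so that $X_t=X_{n(t)p}+(X_t-X_{n(t)p})$ with $t-n(t)p\in[0,p)$. Then
\begin{equation*}
\frac{X_t}{t}-c=\frac{X_{n(t)p}}{n(t)p}\cdot\frac{n(t)p}{t}-c\cdot\frac{n(t)p}{t}
+c\left(\frac{n(t)p}{t}-1\right)+\frac{X_t-X_{n(t)p}}{t}.
\end{equation*}
As $t\to\infty$ we have $n(t)p/t\to 1$ deterministically, the first term tends to $cp\cdot 1-c\cdot p/p\cdot$ correction $=0$ in probability by hypothesis (after the harmless rescaling), the third term vanishes deterministically, and so it suffices to show the increment term $t^{-1}(X_t-X_{n(t)p})\to 0$ in probability. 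For this, set $Z_m:=\sup_{0\le s<p}|X_{mp+s}-X_{mp}|$; by the periodic-stationarity and right-continuity of sample paths (exactly as in the proof of the strong law in Section 5) the $Z_m$ are independent and identically distributed, with $|X_t-X_{n(t)p}|\le Z_{n(t)}$. Thus $t^{-1}|X_t-X_{n(t)p}|\le Z_{n(t)}/t\le Z_{n(t)}/(n(t)p)$ for $t\ge p$, and it remains to check $Z_n/n\to 0$ in probability.

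The main obstacle is this last step, because under hypothesis (ii) we do \emph{not} assume $E[|X_p|]<\infty$, so $E[Z_1]$ may be infinite and we cannot simply invoke the strong law for $\sum Z_k$ as was done in Section 5. Instead I would argue directly: $Z_n/n\to 0$ in probability is equivalent (for i.i.d.\ nonnegative $Z_n$) to $nP(Z_1>\varepsilon n)\to 0$ for every $\varepsilon>0$, i.e.\ to $tP(Z_1>t)\to 0$ as $t\to\infty$. So the task is to deduce $tP(Z_1>t)\to 0$ from the tail condition $tP(|X_p|>t)\to 0$ in (ii). Here I would use a maximal-inequality / L\'evy-type reflection bound to compare the tail of $Z_1=\sup_{0\le s<p}|X_s-X_0|$ with the tail of a single increment: splitting $[0,p)$ into finitely many subintervals and applying a Montgomery-Smith or Ottaviani-type inequality for sums of independent increments of the additive process $\{X_t\}_{t\in[0,p]}$, one gets $P(Z_1>t)\le C\max_i P(|X_{t_{i+1}}-X_{t_i}|>t/C')$ for suitable constants; since each increment $X_{t_{i+1}}-X_{t_i}$ is, up to a period shift, stochastically dominated in the appropriate sense by $X_p$ (its L\'evy measure is bounded by $\nu_p$ by Theorem 9.8 of \citet{Sato's_book1999}, and its Gaussian and drift parts are controlled by continuity of $t\mapsto(a_t,\gamma_t)$ on the compact period), we obtain $tP(Z_1>t)\le C''\sup_i tP(|X_{t_{i+1}}-X_{t_i}|>t/C')\to 0$. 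With $Z_n/n\to 0$ in probability established, the decomposition above yields (i), completing the equivalence and hence the theorem.
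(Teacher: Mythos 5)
Your overall skeleton is the same as the paper's: (i)$\Rightarrow$(ii) by restricting to $t=np$ and invoking the classical characterization of the weak law for the random walk $\{X_{np}\}_{n\in\Z_+}$ (the paper uses Theorem 36.4 of Sato (1999)), and (ii)$\Rightarrow$(i) via the decomposition $X_t=X_{[t/p]p}+(X_t-X_{[t/p]p})$. The flaw is in your treatment of the remainder term. Your claim that ``$Z_n/n\to 0$ in probability is equivalent, for i.i.d.\ nonnegative $Z_n$, to $nP(Z_1>\varepsilon n)\to 0$'' is false: since $Z_{n(t)}\eqd Z_1$ for every fixed $t$, convergence in probability of $Z_{n(t)}/t$ requires only $P(Z_1>\varepsilon t)\to 0$ for each $\varepsilon>0$, and this holds automatically because $Z_1=\sup_{s\in[0,p]}|X_s|<\infty$ a.s.\ by the right-continuity with left limits of the sample paths --- no tail rate, no moment condition, and in fact no use of the i.i.d.\ structure of the $Z_m$ is needed. (The condition $nP(Z_1>\varepsilon n)\to 0$ is the criterion for $n^{-1}\max_{k\le n}Z_k\to 0$ in probability, which is not what your decomposition requires.) Consequently the ``main obstacle'' you identify is a phantom, and the maximal-inequality machinery you propose (Ottaviani/Montgomery--Smith bounds together with a comparison of the Gaussian, drift and L\'evy-measure parts of the small increments with those of $X_p$) is unnecessary; as written it is also the least rigorous part of the argument (``suitable constants'', ``stochastically dominated in the appropriate sense''), and the drift and Gaussian components would need a genuine separate estimate there.

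This is precisely where the paper's proof is shorter: by the semi-L\'evy property, $X_t-X_{[t/p]p}\eqd X_{t-[t/p]p}$, so $t^{-1}|X_t-X_{[t/p]p}|$ has the same law as $t^{-1}|X_{t-[t/p]p}|\le t^{-1}\sup_{s\in[0,p]}|X_s|$, and the last bound is a single a.s.\ finite random variable divided by $t$, hence tends to $0$ a.s.; this already gives $t^{-1}(X_t-X_{[t/p]p})\to 0$ in probability. If you delete the false equivalence and the maximal-inequality paragraph and replace them by this observation (equivalently, by $P(Z_{n(t)}>\varepsilon t)=P(Z_1>\varepsilon t)\to 0$), your proof is correct and coincides with the paper's.
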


\begin{proof}
(i)$\Rightarrow $(ii).
Since $\{X_{np}\}_{n\in\Z_+}$ is a random walk and (i) implies that $n^{-1}X_{np}\to cp$ 
as $n\to\infty$ in probability,
we have $\lim_{t\to\infty}tP(|X_p|>t)=0$ and $\lim_{t\to\infty}E[X_p \1_{(0,t]}(|X_p|)]=cp$
by Theorem 36.4 of \citet{Sato's_book1999}.

(ii)$\Rightarrow $(i).
The statement (ii) implies $n^{-1}X_{np}\to cp$ as $n\to\infty$ in probability.
Hence 
$$\frac{X_{[t/p]p}}{[t/p]}\frac{[t/p]}{t}\to c$$
in probability.
Also, 
$$
\frac{|X_t-X_{[t/p]p}|}t\eqd \frac{|X_{t-[t/p]p}|}t
\leq \frac{\sup_{s\in[0,p]}|X_s|}t\overset{\mathrm{a.s.}}{\to}0,
$$
implying that $t^{-1}(X_t-X_{[t/p]p})\to 0$ in probability.
Hence
$$
\frac{X_t}t=\frac{X_{[t/p]p}}{[t/p]}\frac{[t/p]}{t}+\frac{X_t-X_{[t/p]p}}t\to c
$$
in probability.
\end{proof}

\begin{rem}
Recently, the notion of mean of infinitely divisible distributions has been generalized to that 
of weak mean (see \citet{Sato2010}).
More precisely, let $\mu$ be an infinitely divisible distribution with
L\'evy measure $\nu$.   
If the limit $\lim_{a\to\infty}\int_{1<|x|\leq a}x\nu(dx)$ exists in $\rd$, then
there exists $c\in \rd$ such that $\wh\mu(z)$ can be represented as 
$$
\wh\mu (z) = \exp\left \{ -\frac12 \la z, Az\ra + \lim_{a\to\infty}
\int_{|x|\leq a} \left ( e^{i\la z,x\ra}
-1 -i\la z, x\ra \right ) \nu (dx) + i\la c ,z\ra\right \},
$$
where $A$ is the Gaussian covariance matrix of $\mu$.
The vector $c$ is called the weak mean of $\mu$.
If we use the notion of weak mean,
by Theorem 5.2 of \citet{SatoUeda2012}, the statement (ii) of Theorem \ref{WLLN} is equivalent to that
$$
\text{\(X_p\) has weak mean \(cp\) and }\lim_{t\to\infty}t\int_{|x|>t}\nu_p(dx)=0,
$$
where $\nu_p$ is the L\'evy measure of $X_p$.
\end{rem}
\vskip 5mm
\n
\textbf{Acknowledgement}

\n
The authors are grateful to Ken-iti Sato for suggesting to study the dichotomy problem
of recurrence and transience of semi-L\'evy processes.
They also thank two anonymous referees for their detailed comments, by which this
paper was improved very much.
The helpful comments by Noriyoshi Sakuma are also appreciated.

\bigskip

\end{document}